\renewcommand{\arraystretch}{1.2}
\newcolumntype{L}{>{$}l<{$}} 
\newcommand{\scm}{\scriptstyle}
\newcommand{\ti}{\times}
\newcommand{\Z}{\mathbf Z}
\newcommand{\Q}{\mathbf Q}
\renewcommand{\phi}{\varphi}
\newcommand{\F}{\mathbf F}
\newcommand{\YY}{\mathcal{Y}}
\newcommand{\XX}{\mathcal{X}}
\newcommand{\ab}{\hspace*{0.4mm}}
\DeclareMathOperator{\SSS}{S}
\newcommand{\sd}{\SSS_3}
\newcommand{\al}{\alpha}
\newcommand{\be}{\beta}
\newcommand{\ga}{\gamma}
\newcommand{\ttau}{\tilde{\tau}}
\newcommand{\eps}{\varepsilon}
\newcommand{\oeta}{\overline{\eta}}
\newcommand{\oxi}{\overline{\xi}}
\newcommand{\bp}{\begin{picture}}
\newcommand{\ep}{\end{picture}}
\renewcommand{\scm}{\scriptstyle}
\newcommand{\rxisoa}[1]{\xrightarrow[\bp(25,0)\put(0,15){$\scm\sim$}\ep]{#1}}
\DeclareMathOperator{\B}{B}
\DeclareMathOperator{\Bi}{Biset}
\DeclareMathOperator{\Mack}{Mack}
\DeclareMathOperator{\UU}{U}
\DeclareMathOperator{\kernel}{kern}
\DeclareMathOperator{\Aut}{Aut}
\DeclareMathOperator{\e}{e}
\DeclareMathOperator{\rk}{rk}
\DeclareMathOperator{\idd}{id}
\DeclareMathOperator{\HH}{H}
\DeclareMathOperator{\CC}{C}
\DeclareMathOperator{\A}{A}
\DeclareMathOperator{\KK}{K}
\DeclareMathOperator{\Rep}{Rep}
\newcommand{\ul}{\underline}
\newcommand{\tiG}{\underset{G}{\ti}}
\newcommand{\cdotG}{\underset{G}{\cdot}}
\newcommand{\enger}{\setlength{\arraycolsep}{3pt}
	\renewcommand{\arraystretch}{1.0} }
\newcommand{\weiter}{\setlength{\arraycolsep}{3pt}
	\renewcommand{\arraystretch}{1.3} }
\newcommand\Tstrut{\rule{0pt}{2.6ex}}       
\newcommand\Bstrut{\rule[-0.9ex]{0pt}{0pt}}
\newcommand{\rsmatzd}[6]{\enger
	\left(
	\begin{array}{rrr}
		\small #1 & \small #2 & \small #3 \\
		\small #4 & \small #5 & \small #6 \\
	\end{array}
	\right) 
	\weiter }
\newtheorem{Lemma}{Lemma}
\newtheorem{Lemma*}{Lemma A}
\newtheorem{Remark}[Lemma]{Remark}
\newtheorem{Notation}[Lemma]{Notation}
\newtheorem{Remark*}[Lemma]{}
\newtheorem{Corollary}[Lemma]{Corollary}
\newtheorem{Theorem}[Lemma]{Theorem}
\newtheorem{Proposition}[Lemma]{Proposition}
\begin{document}
\title{The integral double Burnside ring of the\\ symmetric group $\sd$}
\author{Nora Krau{\ss}}
\date{}
\maketitle

 The double Burnside $R$-algebra $\B_R(G,G)$ of a finite group $G$ with coefficients in a commutative ring $R$ has been introduced by S.\ Bouc. It is $R$-linearly generated by finite $(G,G)$-bisets, modulo a relation identifying disjoint union and sum. Its multiplication is induced by the tensor product. 
 In his thesis at NUI Galway, B.\ Masterson described $\B_\Q(\sd,\sd)$ as a subalgebra of $\Q^{8\ti 8}$. We give a variant of this description and continue to describe $\B_R(\sd,\sd)$ for $R\in\{\Z,\Z_{(2)},\F_2,\Z_{(3)},\F_3\}$ via congruences as suborders of certain $R$-orders respectively via path algebras over $R$.

\renewcommand{\thefootnote}{\fnsymbol{footnote}}
\footnotetext[0]{MSC 2020: 19A22.}
\renewcommand{\thefootnote}{\arabic{footnote}}

\section{Introduction}
\subsection{Groups}
Groups describe symmetries of objects. That is to say, any mathematical object $X$ has a symmetry group, called automorphism group $\Aut(X)$, consisting of isomorphisms from $X$ to $X$. For instance, for a natural number $n$, the set $\{1,2,\dots,n\}$ has as automorphism group the symmetric group $\Aut(\{1,2,\dots,n\})=\SSS_n$. This group consists of all bijections from $\{1,2,\dots,n\}$ to itself.
For example, we obtain
\[\begin{array}{rcl}\sd &=& \left\{\rsmatzd{1}{2}{3}{1}{2}{3}, \rsmatzd{1}{2}{3}{2}{1}{3}, \rsmatzd{1}{2}{3}{3}{2}{1}, \rsmatzd{1}{2}{3}{1}{3}{2}, \rsmatzd{1}{2}{3}{2}{3}{1},\rsmatzd{1}{2}{3}{3}{1}{2} \right\}\\\\
&=& \left\{\idd, (1,2),(1,3),(2,3),(1,2,3),(1,3,2) \right\}~. \end{array}\]
In the first row, $\rsmatzd{1}{2}{3}{a}{b}{c}$ is the map sending $1\mapsto a$, $2\mapsto b$, $3\mapsto c$.\\
In the second row, we have used the cycle notation, e.g.\ $\rsmatzd{1}{2}{3}{3}{1}{2}=(1,3,2)$, the latter meaning {\xymatrix@C-1.8em@R-1.4em{
		1\ar@{|-{>}}@/^1pc/[rr]& & 3\ar@{|-{>}}@/^1pc/[dl] \\
		& 2\ar@{|-{>}}@/^1pc/[ul] & 
}}.\\
We multiply by composition, e.g.\ $(1,2)\bullet(1,3)=(1,2,3)$.\\
By a theorem of Cayley, any finite group is isomorphic to a subgroup of $\SSS_n$ for some $n$.

\subsection{The Biset category and biset functors}\label{biset}
Suppose given finite groups $H$ and~$G$. An \textit{$(H,G)$-biset $X$} is a finite set $X$ together with a multiplication with elements of $H$ on the left and a multiplication with elements of $G$ on the right that commute with each other, i.e.\ \[(h\cdot x)\cdot g = h\cdot (x\cdot g)=:h\cdot x\cdot g \] for $h\in H$, $g\in G$ and $x\in X$.

As a first example, $M_1:=\SSS_3$ is a $(\SSS_2,\SSS_3)$-biset via multiplication in $\sd$. So for  $h\in \SSS_2=\{\idd, (1,2)\}$, $g\in \sd$ and $x\in M_1$ we let $h\cdot x\cdot g:=h\bullet x\bullet g$.

As a second example, consider the cyclic group $\CC_3=\{\idd,(1,2,3),(1,3,2)\}$ and the group isomorphism $\al:\CC_3\rightarrow\CC_3$, $x\mapsto x^2$. Then the set $M_2:=\CC_3$ is a $(\CC_3,\CC_3)$-biset, on the left via multiplication, on the right via application of $\al$ and then multiplication. E.g.\[\begin{array}{rclcl}(1,2,3)\cdot(1,3,2)\cdot (1,3,2) &=& (1,2,3)\bullet(1,3,2)\bullet\al((1,3,2))&&\\&=&(1,2,3)\bullet(1,3,2)\bullet(1,2,3)&=&(1,2,3)~.\end{array}\]

Suppose given a commutative ring $R$. S.\ Bouc introduced the {\it biset category $\Bi_{R}$}, see \cite[\S 3.1]{Bouc}, see also the historical comments in \cite[\S 1.4]{Bouc}. As objects, the category $\Bi_R$ has finite groups. The $R$-module of morphisms between two finite groups $H$ and $G$ is given by the double Burnside \mbox{$R$-module} $\Bi_{R}(H,G)=\B_R(H,G)$, which is $R$-linearly generated by finite $(H,G)$-bisets, modulo a relation identifying disjoint union and sum. In particular, each $(H,G)$-biset $M$ yields a morphism $H\xrightarrow{[M]}G$ in $\Bi_{R}$. Composition of morphisms in $\Bi_R$ is given by a tensor product operation on bisets that is similar to the tensor product of bimodules. Given an $(H,G)$-biset $M$ and an $(G,K)$-biset $N$, we write $M\tiG N$ for their tensor product, which is an $(H,K)$-biset. So in $\Bi_{R}$, we have the commutative triangle \[
{\xymatrix@C-0.5em@R-0.5em{
		&H\ar[rr]^-{[M \tiG N]}\ar[dr]_-{[M]} & & K & & \\                                                 
		& & G\ar[ur]_-{[N]} & &  \\
}}.\] The category $\Bi_{R}$ may roughly be imagined by a picture like this.\[
{\xymatrix@C-0.5em@R-0.5em{
		& \SSS_2\ar[rr]\ar[dr] & & \SSS_4\ar[rr] & & \dots\\                                                 
		1\ar[ur]\ar[dr]& & \sd\ar@(ur,dr)\ar@/^1pc/[dl]\ar[ur] \ar[rr]|(.26)\hole|(.49)\hole & & \dots \\
		& \CC_3\ar@/^1pc/[ur] & & \A_4\ar[]!<-0.5ex,-0.5ex>;[ll]!<-0.5ex,-0.5ex> \ar[uu]\ar[]!<-0.5ex,-0.5ex>;[rr]!<-0.5ex,-0.5ex> & & \dots
}}\] Here, $\A_4$ is the alternating group on $4$ elements. Each biset yields an arrow, and so does each $R$-linear combination of bisets. Of course, there are many more objects in $\Bi_{R}$ -- each finite group is an object there -- and many more arrows between them that are not in our picture.

\subsection{Biset functors}
Let $\mathcal{X}$ and $\mathcal{Y}$ be classes of finite groups closed under forming subgroups, factor groups and extensions.
Following Bouc \cite[\S 3.4.1]{Bouc2}, we say that an $(H,G)$-biset $M$ is {\it $(\XX,\YY)$-free} if for each $m\in M$ the left stabilizer of $m$ in $H$ is in $\XX$ and the right stabilizer of $m$ in~$G$ is in $\YY$. We have the subcategory $\Bi_{R}^{\XX,\YY}$ of $\Bi_R$\ab: As objects, it has finite groups. The $R$-module of morphisms in $\Bi_{R}^{\XX,\YY}$ between two finite groups $H$ and $G$ is given by the submodule of $\B_R(H,G)$ generated by the images of $(\XX,\YY)$-free $(H,G)$-bisets, cf.\ \cite[Lemme 4]{Bouc2}. 

Certain classical theories may now be formulated as contravariant functors from $\Bi_{R}^{\XX,\YY}$ to the category of $R$-modules, called {\it biset functors} over $R$. 

Consider a prime number $p$. Let $\XX$ be the class of all finite groups. Let $\YY$ be the class of finite groups whose orders are not divisible by $p$. Then e.g.\ the $(\SSS_2,\sd)$-biset $M_1$ and the $(\CC_3,\CC_3)$-biset $M_2$ from \S\ref{biset} yield morphisms in $\Bi_{\Z}^{\XX,\YY}$.

Suppose given an object of $\Bi_{\Z}^{\XX,\YY}$, i.e.\ a finite group $G$. Let $$\F_{p}=\Z/p\Z=\{0,\dots,p-1\}~,$$ where we agree to calculate modulo $p$. An $\F_{p}$-representation of $G$ is a finite dimensional $\F_{p}$-vectorspace $V$, together with a left multiplication with elements of $G$. Such a representation is called simple if it does not have a nontrivial subrepresentation. Each representation has a sequence of subrepresentations with simple steps, called composition factors. 

Let $\Rep_{\F_{p}}(G)$ be the free abelian group on the set of isoclasses of simple representations. Each $\F_p$-representation $V$ of $G$ yields an element $[V]$ in $\Rep_{\F_{p}}(G)$, namely the formal sum of its composition factors. Given finite groups $H$ and $G$ and an $(H,G)$-biset $M$, we obtain the map \[\begin{array}{rcl}
\Rep_{\F_{p}}(G) &\xrightarrow{\Rep_{\F_{p}}([M])} & \Rep_{\F_{p}}(H)\\
{[V]} &\mapsto & [\F_{p}M\underset{\F_{p}G}{\otimes} V]~,
\end{array}\]using the usual tensor product over rings.

These constructions furnish a contravariant $\Z$-linear functor $\Rep_{\F_{p}}$ from $\Bi_{\Z}^{\XX,\YY}$ to the category of $\Z$-modules, i.e.\ to the category of abelian groups. In particular, using the bisets $M_1$ and $M_2$ from \S\ref{biset}, we obtain the maps 
\[\begin{array}{rcl}
\Rep_{\F_{p}}(\sd) &\xrightarrow{\Rep_{\F_{p}}([M_1])} & \Rep_{\F_{p}}(\SSS_2)\\
{[V]} &\mapsto & [\text{restriction of $V$ to $\SSS_2$}]
\end{array}\] and \[\begin{array}{rcl}
\Rep_{\F_{p}}(\CC_3) &\xrightarrow{\Rep_{\F_{p}}([M_2])} & \Rep_{\F_{p}}(\CC_3)\\
{[V]} &\mapsto & [\text{twist of $V$ with $\al$}]~.
\end{array}\]
Note that, if $p\leq n$, even the simple $\F_p$-representations of $\SSS_n$ are not entirely known: One knows a construction, due to James \cite{James}, but one does not know their $\F_{p}$-dimensions.
Biset functors do not directly aim to solve this problem, but at any rate they are a tool to work with these representations.

\subsection{Globally-defined Mackey functors}
There is an equivalence of categories between the category of biset functors over $R$ and the category of {\it globally-defined Mackey functors $\Mack_{R}^{\XX,\YY}$} \cite[\S 8]{Webb}. Here, a globally-defined Mackey functor, with respect to $\XX$ and $\YY$, maps groups to $R$-modules and each group morphism $\al$ covariantly to an $R$-module morphism $\al_\star$\ab, provided $\kernel(\al)\in\YY$, and contravariantly to $\al^\star$\ab, provided $\kernel(\al)\in\XX$. It is required that these morphisms satisfy a list of compatibilities, amongst which a Mackey formula, see e.g.\ \cite[\S 8]{Webb}. By that equivalence, these requirements on a Mackey functor can now be viewed as properties that result from being a contravariant functor from $\Bi_{R}^{\XX,\YY}$ to $R$-Mod. 
\subsection{Further examples}
We list two examples of biset functors, \cite[\S 8]{Webb}.
\begin{itemize}
	\item Let $\XX=\{1\}$ and let $\YY$ consist of all finite groups. Let $n\geq 0$. Consider the biset functor $\Bi_{\Z}^{\XX,\YY}\rightarrow \Z$-Mod that maps a finite group $G$ to the algebraic K-theory $\KK_n(\Z G)$ of $\Z G$. 
	\item Let $\XX$ consist of all finite groups and let $\YY=\{1\}$. Let $n\geq 0$. Consider the biset functor $\Bi_{R}^{\XX,\YY}\rightarrow R$-Mod that maps a finite group $G$ to the cohomology  $\HH^n(G, R)$ of $G$ with trivial coefficients.\end{itemize}
For some more examples, see \cite[\S 8]{Webb}. The example of the classical Burnside ring, depending on a group $G$, is also explained in \cite[\S 6.1]{Bouc3}.

\subsection{The double Burnside algebra}
Suppose given a finite group $G$, i.e\ an object of  $\Bi_R$\ab. Its endomorphism ring  $\B_R(G,G)$ in the category $\Bi_{R}$ is called {\it double Burnside algebra} of $G$.  

The isomorphism classes of finite transitive $(G,G)$-bisets form an $R$-linear basis of $\B_R(G,G)$. In particular, if we choose a system $\mathcal{L}_{G\ti G}$ of representatives for the conjugacy classes of subgroups of $G\ti G$,  we have the $R$-linear basis $([(G\ti G)/U] : U\in \mathcal{L}_{G\ti G})$.

If $G$ is cyclic and if $R$ is a field in which $|G|$ and $\upvarphi(|G|)$ are invertible, where $\upvarphi$ denotes  Euler's totient function, then the double Burnside algebra $\B_R(G,G)$ is semisimple. This is shown in \cite[Theorem 8.11, Remark 8.12(a)]{Boltje-Danz}.   

In case of $G=\sd$, we have $22$ conjugacy classes of subgroups of $\sd\ti\sd$ and thus $\rk_R(\B_R(\sd,\sd))=22$.
The double Burnside $\Q$-algebra $\B_\Q(\sd,\sd)$ has been described by B. Masterson \cite[\S 8]{Masterson} and then by B. Masterson and G. Pfeiffer \cite[\S7]{Pfeiffer}. We describe $\B_\Q(\sd,\sd)$ independently, using a direct Magma-supported calculation \cite{Magma}, with the aim of being able to pass from $\B_\Q(\sd,\sd)$ to $\B_\Z(\sd,\sd)$ in the sequel.

In order to do that, we first restate some preliminaries on bisets and the double Burnside ring in \S\ref{prelim} and construct a $\Z$-linear basis of $\B_\Z(\sd,\sd)$ in \S\ref{Basis}.

In \S\ref{rational} we tackle the problem that the double Burnside $\Q$-algebra $\B_\Q(\sd,\sd)$  is not semisimple \cite[Proposition 6.1.5]{Bouc}, thus not isomorphic to a direct product of matrix rings. 
As a substitute, we use a suitable isomorphic copy $A$ of $\B_\Q(\sd,\sd)$. We obtain this copy using a Peirce decomposition of $\B_\Q(\sd,\sd)$. In addition, we give a description of $\B_\Q(\sd,\sd)$ as path algebra modulo relations.

The next step, in \S\ref{int}, is to pass from $\B_\Q(\sd,\sd)$ to $\B_\Z(\sd,\sd)$. We find a $\Z$-order $A_\Z$ inside $A$ such that $A_\Z$ contains an isomorphic copy of $\B_\Z(\sd,\sd)$, which we describe via congruences, cf.\ \Cref{gamma}, \Cref{Proplamb}.
\[
\begin{array}{l}
\xymatrix@C=24mm{
	\B_\Q(\sd,\sd)\ar[r]^{\sim}  & A \\
	\overset{\phantom{.}}{\B_\Z(\sd,\sd)}	\ar@{^{(}->}[u]\ar[r]^{\text{injective}} &\overset{\phantom{.}}{A_\Z}\ar@{^{(}->}[u]
}\\
\end{array}
\] 

We calculate a path algebra for $\B_{\Z_{(2)}}(\sd,\sd)$, cf.\ \Cref{pfadsdZ2}. We deduce that $\B_{\F_{2}}(\sd,\sd)$ is Morita equivalent to the path algebra\[\F_2\left[{\xymatrix@C-0.5em@R-0.5em{
		\tilde \e_3\ar@/^1pc/[rr]^-{\hspace{-0.7mm}\tilde \tau_2} & &\tilde \e_5\ar@(ur,dr)^-{\hspace{-0.7mm}\tilde\tau_7}\ar@/^1pc/[ll]^-{\hspace{-0.7mm}\tilde \tau_1}\ar@/_2pc/[rr]_-{\hspace{-0.7mm}\tilde \tau_3} & &  \tilde \e_4\ar@/_2pc/[ll]_-{\hspace{-0.7mm}\tilde\tau_4}}}\right] /\left(\begin{array}{ccccccccccc}
\ttau_2\ttau_1   &,& \ttau_2\ttau_3  &,& \ttau_2\ttau_7,  \\
\ttau_4\ttau_1&,& \ttau_4\ttau_3  &,& \ttau_4\ttau_7, \\
\ttau_7\ttau_1&,& \ttau_7\ttau_3&,&  \ttau_7^2-\ttau_1\ttau_2~ \\
\end{array}\right) ~, \]cf.\ \Cref{pfadsdF2}.

We calculate a path algebra for $\B_{\Z_{(3)}}(\sd,\sd)$, cf.\ \Cref{pfadsdZ3}. We deduce that $\B_{\F_{3}}(\sd,\sd)$ is Morita equivalent to the path algebra
\[\F_3\left[{\xymatrix@C-0.5em@R-0.5em{
		\tilde \e_5 & \tilde \e_3\ar@/^2pc/[rr]^-{\hspace{-0.7mm}\ttau_2} & &\tilde \e_6\ar@/^2pc/[ll]^-{\hspace{-0.7mm}\ttau_1}\ar@/_2pc/[rr]_-{\hspace{-0.7mm}\ttau_3} & &  \tilde \e_4\ar@/_2pc/[ll]_-{\hspace{-0.7mm}\ttau_4}
}}\right] /(\ttau_4\ttau_3,~\ttau_4\ttau_1,~\ttau_2\ttau_1,~\ttau_2\ttau_3 )~,\] cf.\ \Cref{pfadsdF3}.

\section{Preliminaries on bisets and the double Burnside algebra}\label{prelim}

{\it Bisets.} Recall that an \textit{$(G,G)$-biset $X$} is a finite set $X$ together with a left $G$ and a right $G$-action that commute with each other, i.e.\ \mbox{$(h\cdot x)\cdot g = h\cdot (x\cdot g)=:h\cdot x\cdot g$} for $h,g\in G$ and $x\in X$.

Every $(G,G)$-biset $X$ can be regarded as a left $(G\ti G)$-set by setting $(h,g)x:=hxg^{-1}$ for $(h,g)\in G\ti G$ and $x\in X$. Likewise, every  left $(G\ti G)$-set $Y$ can be regarded as an $(G,G)$-biset by setting $h\cdot y\cdot g:=(h,g^{-1})y$ for $h,g\in G$ and $y\in Y$.
We freely use this identification.

{\it Tensor product.} Let  $M$ be an $(G,G)$-biset and let $N$ be a $(G,G)$-biset. The cartesian product $M\ti N$ is a $(G,G)$-biset via $h(m,n)p= (hm,np)$ for $h,p\in G$ and $(m,n)\in M\ti N$. It becomes a left $G$-set via $g(m,n)= (mg^{-1},gn)$ for  $g\in G$ and $(m,n)\in M\ti N$. We call the set of $G$-orbits on $M\ti N$ 
the {\it tensor product}  $M\tiG N$  of $M$ and $N$. This also is an $(G,G)$-biset. The $G$-orbit of the element $(m,n)\in M\ti N$ is denoted by $m\tiG n\in M\tiG N$.
Moreover, let $L$ be a $(G,G)$-biset. Then $M\tiG ( N\tiG L) \xrightarrow{\sim}{} (M\tiG N)\tiG L$, $m\tiG (n\tiG \ell) \mapsto  (m\tiG n)\tiG \ell$ as $(G,G)$-bisets.

{\it Double Burnside $R$-algebra.} We denote by $\B_R(G,G)$ the double Burnside $R$-algebra of $G$. Recall that $\B_R(G,G)$ is the $R$-module freely generated by the isomorphism classes of finite $(G,G)$-bisets, modulo the relations $[M\sqcup N] = [M]+[N]\text{ for $(G,G)$-bisets $M,N$.}$ Multiplication is defined by $[M]\cdotG[N]= [M\tiG N]\text{ for $(G,G)$-bisets $M,N$.}$ An $R$-linear basis of $\B_R(G,G)$ is given by $([(G\ti G)/U]: U\in\mathcal{L}_{G\ti G})$, where we choose a system $\mathcal{L}_{G\ti G}$ of representatives for the conjugacy classes of subgroups of $G\ti G$. Moreover, $1_{\B_\Z(G,G)}=[G]$.

{\it Abbreviation}. In case of $G=\sd$, we often abbreviate $\B_R:=\B_R(\sd,\sd)$. 

\section{$\Z$-linear basis of \texorpdfstring{$\B_\Z(\sd,\sd)$}{B\_Z(S\textthreeinferior,S\textthreeinferior)}}\label{Basis}
The following calculations were done using the computer algebra system Magma \cite{Magma}.

The group $\sd$ has the subgroups \begin{center}$V_0:=\{\idd\}$, $V_1:=\langle (1,2)\rangle$, $V_2:=\langle (1,3)\rangle$, $V_3:=\langle (2,3)\rangle$, $V_4:=\langle (1,2,3)\rangle$, $V_5:=\sd$ . \end{center}
The set $\{V_0$, $V_1$, $V_4$, $V_5\}$ is a system of representatives for the conjugacy classes of subgroups of $\sd$. In $\sd$, we write $a:=(1,2)$, $b:=(1,2,3)$ and $1:=\idd$. So $ V_1=\langle a\rangle$, $V_4=\langle b\rangle$ and $V_5=\langle a,b\rangle$.

A system of representatives for the conjugacy classes of subgroups of $\sd\ti \sd$ is given by
\[\enger
\begin{array}{lccclccc}
U_{0,0}&:=& V_0\ti V_0 &=& \{(1,1)\},\\
U_{1,0}&:=& V_1\ti V_0 &=& \langle(a,1)\rangle,\\
U_{0,1}&:=& V_0\ti V_1 &=& \langle(1,a)\rangle,\\
\Delta(V_1)&=& \phantom{V_0\ti V_0 } &\phantom{=}&\langle(a,a)\rangle,\\
U_{4,0}&:=& V_4\ti V_0 &=&\langle(b,1)\rangle,\\
U_{0,4}&:=& V_0\ti V_4 &=& \langle(1,b)\rangle,\\
\Delta(V_4)&=&\phantom{V_0\ti V_0 } &\phantom{=}&\langle(b,b)\rangle,\\
U_{1,1}&:=& V_1\ti V_1 &=& \langle(a,1), (1, a)\rangle,\\
U_{5,0}&:=& V_5\ti V_0 &=&\langle(a,1),(b,1)\rangle,\\
U_{0,5}&:=& V_0\ti V_5 &=& \langle(1,a),(1,b)\rangle,\\
U_6 &:=&\phantom{V_0\ti V_0 } &\phantom{=}& \langle(a,a),(1,b)\rangle,\end{array}\begin{array}{lccclccc}
U_{4,1}&:=& V_4\ti V_1 &=& \langle(b,1), (1, a)\rangle,\\
U_{1,4}&:=& V_1\ti V_4 &=& \langle(a,1), (1, b)\rangle,\\
U_7 &:=&\phantom{V_0\ti V_0 } &\phantom{=}& \langle(a,a),(b,1)\rangle,\\
\Delta(V_5)&=&\phantom{V_0\ti V_0 } &\phantom{=}&\langle(a,a),(b,b)\rangle,\\
U_{4,4}&:=& V_4\ti V_4 &=& \langle(b,1), (1, b)\rangle,\\
U_{1,5}&:=& V_1\ti V_5 &=& \langle(a,1), (1,a),(1, b)\rangle,\\
U_{5,1}&:=& V_5\ti V_1 &=& \langle(a,1),(b,1), (1, a)\rangle,\\
U_{4,5}&:=& V_4\ti V_5 &=& \langle(b,1), (1,a),(1, b)\rangle,\\
U_{5,4}&:=& V_5\ti V_4 &=& \langle(a,1),(b,1), (1, b)\rangle,\\
U_8&:=&\phantom{V_0\ti V_0 } &\phantom{=}& \langle(a,a),(b,1), (1, b)\rangle,\\
U_{5,5}&:=& V_5\ti V_5 &=& \langle(a,1), (1, a), (b,1), (1, b)\rangle .
\end{array}\]

Let $H_{i,j}:= [(\sd\ti\sd)/U_{i,j}]$ for $i,j\in \{0,1,4,5\}$, $H_s:=[(\sd\ti\sd)/U_{s}]$ for $s\in [6,8]$ and $H^\Delta_t:=[(\sd\ti\sd)/\Delta(V_t)]$ for $t\in \{1,4,5\}$.

So we obtain the $\Z$-linear basis \[\mathcal{H}:=\begin{array}{ll} 
&(H_{0,0},  	H_{1,0} ,       H_{0,1},      H^\Delta_1        ,    H_{4,0} ,       H_{0,4} ,    H^\Delta_4,		H_{1,1}      ,      H_{5,0} ,    H_{0,5} ,      H_6, \\
&H_{4,1}   , H_{1,4}         ,   H_7       , H^\Delta_5  ,    H_{4,4}   ,    H_{1,5}      ,       H_{5,1}  ,      H_{4,5}  ,   H_{5,4}    ,   H_8   ,    H_{5,5})\end{array}\] of $\B_\Z(\sd,\sd)$. Of course, $\mathcal{H}$ is also a $\Q$-linear basis of $\B_\Q(\sd,\sd)$.

\section{\texorpdfstring{$\B_\Q(\sd,\sd)$}{B\_Q(S\textthreeinferior,S\textthreeinferior)}}\label{rational} 
\subsection{\texorpdfstring{Peirce decomposition of $\B_\Q(\sd,\sd)$}{Peirce decomposition of B\_Q(S\textthreeinferior,S\textthreeinferior)}}\label{Ab23}
Using Magma \cite{Magma} we obtain an orthogonal decomposition of $1_{\B_\Q}$ into the following idempotents of $\B_\Q=\B_\Q(\sd,\sd)$.
\[\footnotesize\begin{array}{rclcrcl}
e&:=& -\frac{1}{2}H_{0,0}+H_{1,0}+\frac{1}{2}H_{4,0} &   {\eps}_2&:=&    -H_{0,0} + H_{1,0} + H_{0,1} + H^\Delta_1 - 2H_{1,1} \\
g&:=&\frac{4}{3}H_{0,0}-2H_{1,0}-\frac{4}{3}H_{0,1}- H_{4,0}+2H_{1,1}+ H_{4,1}& {\eps}_3&:=&    -\frac{1}{4}H_{0,0} + \frac{1}{4}H_{4,0} + \frac{1}{4}H_{0,4} + \frac{1}{2}H^\Delta_4 - \frac{3}{4}H_{4,4}\\
h&:=& -\frac{1}{12}H_{0,0}+\frac{1}{3}H_{0,1}+\frac{1}{4} H_{4,0}-\frac{1}{4} H_{0,4}+\frac{3}{4} H_{4,4}- H_{4,1} & {\eps}_4&:=&    \frac{1}{2}H_{0,0} - H^\Delta_1 - \frac{1}{2}H^\Delta_4 + H^\Delta_5
\end{array}\]Write $\eps_1:=e+g+h$. In \Cref{prim1} and \Cref{prim2}, we shall see that these idempotents are primitive.\\In a next step, we fix $\Q$-linear bases of the Peirce components. 
\[ \footnotesize  \hspace*{-0.6cm}\begin{array}{c|l}
	\begin{array}{l}{\text{Peirce}}\\\text{component}\hspace*{-4pt}\end{array} &{\begin{array}{l}{\Q\text{-linear basis}}\\[-0.1pt]{\phantom{\text{component}}}\end{array}} \Tstrut\Bstrut \\ \hline
	e\B_\Q e & e=-\frac{1}{2}H_{0,0}+H_{1,0}+\frac{1}{2}H_{4,0}  \Tstrut\Bstrut \\  \hline
	e\B_\Q g & b_{e,g}:=\frac{1}{2}H_{0,0} - H_{1,0} - \frac{1}{2}H_{0,1} - \frac{1}{2}H_{4,0} + H_{1,1} + \frac{1}{2}H_{4,1}
	\Tstrut\Bstrut \\  \hline
	e\B_\Q h & b_{e,h}:=-\frac{1}{8}H_{0,0} + \frac{1}{4}H_{1,0} + \frac{1}{2}H_{0,1} + \frac{1}{8}H_{4,0} - \frac{3}{8}H_{0,4} - H_{1,1} - \frac{1}{2}H_{4,1} + \frac{3}{4}H_{1,4} + \frac{3}{8}H_{4,4}
	\Tstrut\Bstrut \\  \hline
	
	g\B_\Q e & b_{g,e}:=-\frac{4}{3}H_{0,0} + 2H_{1,0} + H_{4,0} \Tstrut\Bstrut \\  \hline
	g\B_\Q g & g=\frac{4}{3}H_{0,0}-2H_{1,0}-\frac{4}{3}H_{0,1}- H_{4,0}+2H_{1,1}+ H_{4,1} \Tstrut\Bstrut \\  \hline
	g\B_\Q h & b_{g,h}:=-\frac{1}{3}H_{0,0} + \frac{1}{2}H_{1,0} + \frac{4}{3}H_{0,1} + \frac{1}{4}H_{4,0} - H_{0,4} - 2H_{1,1} - H_{4,1} + \frac{3}{2}H_{1,4} + \frac{3}{4}H_{4,4}\Tstrut\Bstrut \\  \hline
	
	h\B_\Q e & b_{h,e}:=-\frac{1}{3}H_{0,0} + H_{4,0}
	\Tstrut\Bstrut \\  \hline
	h\B_\Q g & b_{h,g}:=\frac{1}{3}H_{0,0} -\frac{1}{3}H_{0,1} - H_{4,0} + H_{4,1}
	\Tstrut\Bstrut \\  \hline
	h\B_\Q h & h=-\frac{1}{12}H_{0,0}+\frac{1}{3}H_{0,1}+\frac{1}{4} H_{4,0}-\frac{1}{4} H_{0,4}+\frac{3}{4} H_{4,4}- H_{4,1}
	\Tstrut\Bstrut \\  \hline
	e\B_\Q\eps_4 & b_{e,\eps_4}:=-\frac{1}{8}H_{0,0} + \frac{1}{4}H_{1,0} + \frac{1}{4}H_{0,1} + \frac{1}{8}H_{4,0}+ \frac{1}{8}H_{0,4} - \frac{1}{2}H_{1,1} -  \frac{1}{4}H_{0,5} - \frac{1}{4}H_{4,1} \\
	& \phantom{b_{h,\eps_4}:=}-\frac{1}{4}H_{1,4} - \frac{1}{8}H_{4,4} +  \frac{1}{2}H_{1,5} + \frac{1}{4}H_{4,5}\Tstrut\Bstrut \\  \hline 
	g\B_\Q\eps_4 & b_{g,\eps_4}:= -\frac{1}{3}H_{0,0} + \frac{1}{2}H_{1,0} + \frac{2}{3}H_{0,1} + \frac{1}{4}H_{4,0} 
	+ \frac{1}{3}H_{0,4} -H_{1,1}-  \frac{2}{3}H_{0,5} - \frac{1}{2}H_{4,1} \Tstrut\Bstrut \\ 
	& \phantom{b_{g,\eps_4}:=}-\frac{1}{2}H_{1,4} - \frac{1}{4}H_{4,4} +  H_{1,5} + \frac{1}{2}H_{4,5}\Tstrut\Bstrut \\  \hline 
	
	h\B_\Q\eps_4 & b_{h,\eps_4}:=-\frac{1}{12}H_{0,0} + \frac{1}{6}H_{0,1} + \frac{1}{4}H_{4,0} + \frac{1}{12}H_{0,4} 
	- \frac{1}{6}H_{0,5} -  \frac{1}{2}H_{4,1} - \frac{1}{4}H_{4,4} +\frac{1}{2}H_{4,5}\Tstrut\Bstrut \\  \hline 
	
	\eps_2\B_\Q\eps_2 & \eps_2= -H_{0,0} + H_{1,0} + H_{0,1} + H^\Delta_1 - 2H_{1,1} \Tstrut\Bstrut \\ \hline
	\eps_2\B_\Q\eps_4 & b_{\eps_2,\eps_4}:= -\frac{1}{2}H_{0,0} + \frac{1}{2}H_{1,0} + \frac{1}{2}H_{0,1} + \frac{1}{2}H^\Delta_1 
	+ \frac{1}{2}H_{0,4} -H_{1,1}-  \frac{1}{2}H_{0,5} - \frac{1}{2}H_6 -\frac{1}{2}H_{1,4}\\
	& \phantom{b_{\eps_2,\eps_4}:=}+H_{1,5}\Tstrut\Bstrut \\  \hline

	\eps_3\B_\Q\eps_3 & \eps_3=-\frac{1}{4}H_{0,0} + \frac{1}{4}H_{4,0} + \frac{1}{4}H_{0,4} + \frac{1}{2}H^\Delta_4 - \frac{3}{4}H_{4,4} \Tstrut\Bstrut \\ \hline
	
	\eps_4\B_\Q e & b_{\eps_4,e}:= \frac{1}{6}H_{0,0} - \frac{1}{3}H_{1,0} - \frac{1}{6}H_{4,0} + \frac{1}{3}H_{5,0} \Tstrut\Bstrut \\  \hline
	
	\eps_4\B_\Q g & b_{\eps_4,g}:= -\frac{1}{6}H_{0,0} + \frac{1}{3}H_{1,0} + \frac{1}{6}H_{0,1} + \frac{1}{6}H_{4,0} - \frac{1}{3}H_{1,1} -  \frac{1}{3}H_{5,0} - \frac{1}{6}H_{4,1} +\frac{1}{3}H_{5,1}\Tstrut\Bstrut \\  \hline

	\eps_4\B_\Q h & b_{\eps_4,h}:=\frac{1}{24}H_{0,0} - \frac{1}{12}H_{1,0} - \frac{1}{6}H_{0,1} - \frac{1}{24}H_{4,0} + \frac{1}{8}H_{0,4} +  \frac{1}{3}H_{1,1} + \frac{1}{12}H_{5,0} \\
	& \phantom{b_{h,\eps_4}:=}+\frac{1}{6}H_{4,1} - \frac{1}{4}H_{1,4} -  \frac{1}{8}H_{4,4} - \frac{1}{3}H_{5,1} +\frac{1}{4}H_{5,4}\Tstrut\Bstrut \\  \hline 
	
	\eps_4\B_\Q\eps_2 & b_{\eps_4,\eps_2}:= -\frac{1}{2}H_{0,0} 
	+\frac{1}{2}H_{1,0} + \frac{1}{2}H_{0,1} + \frac{1}{2}H^\Delta_1 + \frac{1}{2}H_{4,0} -H_{1,1} - \frac{1}{2}H_{5,0} -\frac{1}{2}H_{4,1}\\ 
	& \phantom{b_{\eps_2,\eps_4}:=}-\frac{1}{2}H_7 + H_{5,1}  \Tstrut\Bstrut \\ \hline
	\eps_4\B_\Q\eps_4 & \eps_4= \frac{1}{2}H_{0,0} - H^\Delta_1 - \frac{1}{2}H^\Delta_4 + H^\Delta_5,\\
	& b'_{\eps_4,\eps_4}:=\frac{1}{24}H_{0,0} - \frac{1}{12}H_{1,0} - \frac{1}{12}H_{0,1} - \frac{1}{24}H_{4,0} 
	- \frac{1}{24}H_{0,4} +\frac{1}{6}H_{1,1}+  \frac{1}{12}H_{5,0} + \frac{1}{12}H_{0,5}  \\
	& \phantom{b_{g,\eps_4}:=}+\frac{1}{12}H_{4,1}+\frac{1}{12}H_{1,4} + \frac{1}{24}H_{4,4} - \frac{1}{6} H_{1,5} - \frac{1}{6}H_{5,1} - \frac{1}{12}H_{4,5} - \frac{1}{12}H_{5,4} + \frac{1}{6}H_{5,5}~,\\
	&b''_{\eps_4,\eps_4}:=\frac{1}{4}H_{0,0} - \frac{3}{4}H_{1,0} - \frac{3}{4}H_{0,1} + \frac{1}{4}H^\Delta_1 
	- \frac{1}{4}H_{4,0} -\frac{1}{4}H_{0,4}+  \frac{3}{2}H_{1,1} + \frac{3}{4}H_{5,0}  \\
	& \phantom{b_{g,\eps_4}:=}+\frac{3}{4}H_{0,5}-\frac{1}{4}H_6  + \frac{3}{4}H_{4,1} + \frac{3}{4} H_{1,4} - \frac{1}{4}H_7 + \frac{1}{4}H_{4,4} - \frac{3}{2}H_{1,5}- \frac{3}{2}H_{5,1}\\
	& \phantom{b_{g,\eps_4}:=}-\frac{3}{4}H_{4,5}-\frac{3}{4}H_{5,4}+\frac{1}{4}H_8 +\frac{3}{2}H_{5,5}
	\\
	\end{array}\] \begin{Remark}\label{prim1}\rm The idempotents $e,g,h,\eps_2,\eps_3$ are primitive, as $e\B_\Q e\cong \Q$, $g\B_\Q g\cong \Q$, $h\B_\Q h\cong \Q$, $\eps_2\B_\Q \eps_2\cong \Q$ and $\eps_3\B_\Q \eps_3\cong \Q$. 
\end{Remark}We have the following multiplication table for the basis elements of $\B_\Q=\B_\Q(\sd,\sd)$.\[\hspace*{-1cm}\footnotesize {\setlength{\arraycolsep}{0.1pt}
\begin{array}{c|c|c|c|c|c|c|c|c|c|c|c|c|c|c|c|c|c|c|c|c|c|c}
(\cdot) & e & b_{e,g} & b_{e,h} & b_{g,e} & g & b_{g,h} & b_{h,e} & b_{h,g} & h & b_{e,\eps_4} & b_{g,\eps_4} & b_{h,\eps_4} &\eps_2 & b_{\eps_2,\eps_4}  & \eps_3 & b_{\eps_4,e} & b_{\eps_4,g} & b_{\eps_4,h} & b_{\eps_4,\eps_2} &  \eps_4 &b'_{\eps_4,\eps_4} & b''_{\eps_4,\eps_4}\\ \hline    

e & e  &b_{e,g} & b_{e,h} & 0 & 0 & 0 &0 &0& 0& b_{e,\eps_4} &0 &0 & 0 &0 &0 & 0 &0 &0 &0 &0 &0 &0 \\ \hline

b_{e,g} & 0 &0  &0 & e & b_{e,g} & b_{e,h}  &0 &0& 0&0 & b_{e,\eps_4} &0 &0&0 &0 & 0 &0 &0 &0 &0 &0 &0 \\ \hline

b_{e,h} & 0  &0 & 0 & 0 & 0 & 0 &e &b_{e,g}& b_{e,h}&0 & 0 &b_{e,\eps_4} & 0 &0 &0 & 0 &0 &0 &0 &0 &0 &0 \\ \hline

b_{g,e} & b_{g,e}  &g &b_{g,h}  & 0 & 0 & 0 &0 &0& 0&b_{g,\eps_4} & 0 &0 & 0 &0 &0 & 0 &0 &0 &0 &0 &0 &0 \\ \hline

g & 0  &0 &0  & b_{g,e} & g & b_{g,h} &0 &0& 0&0&b_{g,\eps_4}  &0 & 0 &0 &0 & 0 &0 &0 &0 &0 &0 &0 \\ \hline

b_{g,h} & 0  &0 &0  & 0 & 0 & 0 &b_{g,e} &g& b_{g,h}&0&0   & b_{g,\eps_4}&0 &0 &0 & 0 &0 &0 &0 &0 &0 &0 \\ \hline

b_{h,e} & b_{h,e}  &b_{h,g} & h  & 0 & 0 & 0 &0 &0& 0&b_{h,\eps_4}&0   & 0&0 &0 &0 & 0 &0 &0 &0 &0 &0 &0 \\ \hline

b_{h,g} & 0  &0 & 0  & b_{h,e} & b_{h,g} & h &0 &0&0& 0&b_{h,\eps_4}   & 0&0 &0 &0 & 0 &0 &0 &0 &0 &0 &0 \\ \hline

h & 0  &0 & 0  & 0 & 0 & 0 &b_{h,e} &b_{h,g}&h& 0&0&b_{h,\eps_4}   & 0 &0 &0 & 0 &0 &0 &0 &0 &0 &0 \\ \hline

b_{e,\eps_4} & 0  &0 & 0  & 0 & 0 & 0 &0 &0&0& 0&0   & 0&0 &0 &0 & 0 &0 &0 &0 &b_{e,\eps_4} &0 &0 \\ \hline

b_{g,\eps_4} & 0  &0 & 0  & 0 & 0 & 0 &0 &0&0& 0&0   & 0&0 &0 &0 & 0 &0 &0 &0 &b_{g,\eps_4} &0 &0 \\ \hline

b_{h,\eps_4} & 0  &0 & 0  & 0 & 0 & 0 &0 &0&0& 0&0   & 0&0 &0 &0 & 0 &0 &0 &0 &b_{h,\eps_4} &0 &0 \\ \hline

\eps_2 & 0  &0 & 0  & 0 & 0 & 0 &0 &0&0& 0&0   & 0&\eps_2 &b_{\eps_2,\eps_4} &0 & 0 &0 &0 &0 &0 &0 &0 \\ \hline

b_{\eps_2,\eps_4} & 0  &0 & 0  & 0 & 0 & 0 &0 &0&0& 0&0   & 0&0 &0 &0 & 0 &0 &0 &0 &b_{\eps_2,\eps_4} &0 &0 \\ \hline

\eps_3 & 0  &0 & 0  & 0 & 0 & 0 &0 &0&0& 0&0   & 0&0 &0 &\eps_3 & 0 &0 &0 &0 &0 &0 &0 \\ \hline

b_{\eps_4,e} & b_{\eps_4,e}  &b_{\eps_4,g} & b_{\eps_4,h}  & 0 & 0 & 0 &0 &0&0& b'_{\eps_4,\eps_4}&0   & 0&0 &0 &0 & 0 &0 &0 &0 &0 &0 &0 \\ \hline

b_{\eps_4,g} & 0  &0 & 0  & b_{\eps_4,e} & b_{\eps_4,g} & b_{\eps_4,h} &0 &0&0& 0&b'_{\eps_4,\eps_4}   & 0&0 &0 &0 & 0 &0 &0 &0 &0 &0 &0 \\ \hline

b_{\eps_4,h} & 0  &0 &0  & 0 & 0 & 0 &b_{\eps_4,e} &b_{\eps_4,g}& b_{\eps_4,h}&0&0   & b'_{\eps_4,\eps_4}&0 &0 &0 & 0 &0 &0 &0 &0 &0 &0 \\ \hline

b_{\eps_4,\eps_2} & 0  &0 & 0  & 0 & 0 & 0 &0 &0&0& 0&0   & 0&b_{\eps_4,\eps_2} &b''_{\eps_4,\eps_4}-12b'_{\eps_4,\eps_4} &0 & 0 &0 &0 &0 &0 &0 &0 \\ \hline

\eps_4 & 0  &0 & 0  & 0 & 0 & 0 &0 &0&0& 0&0   & 0&0 &0 &0 & b_{\eps_4,e} &b_{\eps_4,g} &b_{\eps_4,h} &b_{\eps_4,\eps_2} &\eps_4 &b'_{\eps_4,\eps_4} &b''_{\eps_4,\eps_4} \\ \hline

b'_{\eps_4,\eps_4} & 0  &0 & 0  & 0 & 0 & 0 &0 &0&0& 0&0   & 0&0 &0 &0 & 0 &0 &0 &0 &b'_{\eps_4,\eps_4} &0 &0 \\ \hline

b''_{\eps_4,\eps_4} & 0  &0 & 0  & 0 & 0 & 0 &0 &0&0& 0&0   & 0&0 &0 &0 & 0 &0 &0 &0 &b''_{\eps_4,\eps_4} &0 &0 \\ 
\end{array}}
\]
We see that $\eps_3$ is even a central element.

\begin{Lemma}\label{ga4}
	Consider $\Q[\eta,\xi]/(\eta^2,\eta\xi,\xi^2) = \Q[\overline{\eta},\overline{\xi}]$, where we let $\overline{\xi} := \xi + (\eta^2,\eta\xi,\xi^2)$ and 
	\mbox{$\overline{\eta} := \eta + (\eta^2,\eta\xi,\xi^2)$}.
	
	We have the $\Q$-algebra isomorphism
	\[\begin{array}{cccc}
	\mu:&\Q[\overline{\eta},\overline{\xi}]&\rightarrow& \eps_4\B_\Q\eps_4 \\
	&\overline{\eta}&\mapsto& b'_{\eps_4,\eps_4}\\
	&\overline{\xi}&\mapsto& b''_{\eps_4,\eps_4}~.
	\end{array}\]
\end{Lemma}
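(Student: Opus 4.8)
The plan is to verify directly that the prescribed assignment $\oeta \mapsto b'_{\eps_4,\eps_4}$, $\oxi \mapsto b''_{\eps_4,\eps_4}$ extends to a well-defined $\Q$-algebra isomorphism. First I would check that $\mu$ is well-defined: since $\Q[\eta,\xi]$ is the free commutative $\Q$-algebra on two generators, there is a unique $\Q$-algebra homomorphism $\Q[\eta,\xi]\to\eps_4\B_\Q\eps_4$ sending $\eta\mapsto b'_{\eps_4,\eps_4}$ and $\xi\mapsto b''_{\eps_4,\eps_4}$, \emph{provided} the images commute; commutativity here is read off the multiplication table, since in the corner $\eps_4\B_\Q\eps_4$ the relevant products $b'_{\eps_4,\eps_4}\cdot b''_{\eps_4,\eps_4}$, $b''_{\eps_4,\eps_4}\cdot b'_{\eps_4,\eps_4}$, $(b'_{\eps_4,\eps_4})^2$, $(b''_{\eps_4,\eps_4})^2$ all vanish. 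That last fact — that these four products are zero — is exactly what forces the ideal $(\eta^2,\eta\xi,\xi^2)$ into the kernel, so the homomorphism descends to $\mu:\Q[\oeta,\oxi]\to\eps_4\B_\Q\eps_4$.

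Next I would pin down both sides as $\Q$-vector spaces. The domain $\Q[\oeta,\oxi]=\Q[\eta,\xi]/(\eta^2,\eta\xi,\xi^2)$ is $3$-dimensional with basis $1,\oeta,\oxi$. For the codomain, I would argue that $\eps_4\B_\Q\eps_4$ has $\Q$-dimension $3$ with basis $\eps_4, b'_{\eps_4,\eps_4}, b''_{\eps_4,\eps_4}$: these three elements are listed as a $\Q$-linear basis of the Peirce component $\eps_4\B_\Q\eps_4$ in the table of bases preceding \Cref{prim1}, and they are $\Q$-linearly independent because they are $\Q$-linear combinations of the basis elements of $\mathcal{H}$ with visibly independent coefficient vectors (e.g. $\eps_4$ involves $H^\Delta_5$, which does not occur in $b'_{\eps_4,\eps_4}$ or $b''_{\eps_4,\eps_4}$; and $b''_{\eps_4,\eps_4}$ involves $H_8$, which does not occur in $b'_{\eps_4,\eps_4}$). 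So both sides are $3$-dimensional.

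It then remains to see that $\mu$ is bijective, for which injectivity suffices by equality of dimensions. Since $\mu(1)=\mu(\overline{1})$ must be the identity of the corner algebra $\eps_4\B_\Q\eps_4$, namely $\eps_4$ itself (note $\eps_4\cdot b'_{\eps_4,\eps_4}=b'_{\eps_4,\eps_4}$ and $\eps_4\cdot b''_{\eps_4,\eps_4}=b''_{\eps_4,\eps_4}$ in the table, confirming $\eps_4$ acts as a left, hence two-sided, identity there), the image of $\mu$ contains $\eps_4, b'_{\eps_4,\eps_4}, b''_{\eps_4,\eps_4}$, i.e. all of the chosen basis of $\eps_4\B_\Q\eps_4$; thus $\mu$ is surjective, hence bijective. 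Being a surjective $\Q$-algebra homomorphism between $\Q$-algebras of equal finite dimension, $\mu$ is an isomorphism.

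The only genuine content is the commutativity/nilpotence check in the first paragraph — verifying that $b'_{\eps_4,\eps_4}$ and $b''_{\eps_4,\eps_4}$ multiply to zero among themselves in all four ways — and this I expect to be the main (though routine) obstacle; it is precisely the entry $\eps_4\B_\Q\eps_4$ block of the multiplication table, which records these products as $0$, so no fresh computation is needed beyond reading the table. Everything else is linear algebra over $\Q$.
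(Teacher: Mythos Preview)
Your proposal is correct and follows essentially the same approach as the paper: verify from the multiplication table that $\eps_4\B_\Q\eps_4$ is commutative with $(b'_{\eps_4,\eps_4})^2=(b''_{\eps_4,\eps_4})^2=b'_{\eps_4,\eps_4}b''_{\eps_4,\eps_4}=0$ to obtain well-definedness, then conclude bijectivity because the basis $(1,\oeta,\oxi)$ is carried to the basis $(\eps_4,b'_{\eps_4,\eps_4},b''_{\eps_4,\eps_4})$. The paper's proof is just a terser version of yours.
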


\begin{proof}
	Since $\eps_4\B_\Q\eps_4 = {_\Q\langle} \eps_4, b'_{\eps_4,\eps_4},  b''_{\eps_4,\eps_4}\rangle$ is commutative and $(b'_{\eps_4,\eps_4})^2=0$, $(b''_{\eps_4,\eps_4})^2=0$ and $b'_{\eps_4,\eps_4}b''_{\eps_4,\eps_4}=0~$, the map $\mu$ is a well-defined $\Q$-algebra morphism. 
	
	As the $\Q$-linear basis $(1,\overline{\eta},\overline{\xi})$ is mapped to the $\Q$-linear basis $(\eps_4, b'_{\eps_4,\eps_4},b''_{\eps_4,\eps_4})$, it is bijective.
\end{proof}

\begin{Remark}\label{prim2}
	The ring $\Q[\overline{\eta},\overline{\xi}]$ is local. In particular, $\eps_4$ is a primitive idempotent of $\B_\Q$\hspace*{0,2mm}.
\end{Remark}

\begin{proof}
	We have $\UU(\Q[\overline{\eta},\overline{\xi}])= \Q[\overline{\eta},\overline{\xi}]\setminus (\overline{\eta},\overline{\xi})$, as for $u:= a + b\overline{\eta} +c\overline{\xi}$ the inverse is given by $u^{-1}= a^{-1} - a^{-2}b\overline{\eta} -a^{-2}c\overline{\xi}$ for $a,b,c\in\Q$, with $a\neq 0$.
	Thus the nonunits of $\Q[\overline{\eta},\overline{\xi}]$ form an ideal and so  $\Q[\overline{\eta},\overline{\xi}]$ is a local ring.  
\end{proof}

To standardize notation, we aim to construct a  $\Q$-algebra $A:=\bigoplus\limits_{i,j} A_{i,j}$ with
\mbox{$A\cong \B_\Q(\sd,\sd)$.} 

In a first step to do so, we choose $\Q$-vector spaces $A_{i,j}$ and $\Q$-linear isomorphisms \mbox{$\ga_{i,j}:A_{i,j}\xrightarrow{\sim} \eps_i \B_\Q\eps_j$} for $i,j\in [1,4]$.
We define the tuple of $\Q$-vector spaces \[\begin{array}{lllllll}
(A_{1,1}~, & & A_{1,2}~, & & A_{1,3}~, & & A_{1,4}~, \\
\phantom{(}A_{2,1}~, & & A_{2,2}~, & & A_{2,3}~, & & A_{2,4}~, \\
\phantom{(} A_{3,1}~, & & A_{3,2}~, & & A_{3,3}~, & & A_{3,4}~, \\
\phantom{(} A_{4,1}~, & & A_{4,2}~, & & A_{4,3}~, & & A_{4,4}) \\
\end{array} ~:=~
\begin{array}{lllllll}
(\Q^{3\ti 3}~, & & 0~, & & 0~, & & \Q^{3\ti 1}~, \\
\phantom{(}   0~, & & \Q~, & & 0~, & & \Q~, \\
\phantom{(}   0~, & & 0~, & & \Q~, & & 0~, \\
\phantom{(}  \Q^{1\ti 3}~, & & \Q~, & & 0~, & & \Q[\overline{\eta},\overline{\xi}])~, \rm cf.\ \Cref{ga4}.
\end{array}\]

We have $\gamma_{s,t}=0$ for $(s,t)\in\{(1,2),(1,3), (2,1), (2,3), (3,1), (3,2), (3,4), (4,3)\}$.

Let \[{\weiter\begin{array}{lll}
	&\begin{array}{rccl}
	\ga_{1,1}:~A_{1,1}&\xrightarrow{\sim}&\eps_1\B_\Q\eps_1 \\
	\begin{pmatrix}
	r_{1,1} & r_{1,2} & r_{1,3}\\
	r_{2,1} & r_{2,2} & r_{2,3}\\
	r_{3,1} & r_{3,2} & r_{3,3}\\
	\end{pmatrix}      &\mapsto & \enger\begin{array}{llllll}
	&r_{1,1}e &+& r_{1,2}b_{e,g} &+& r_{1,3}b_{e,h}\\
	+& r_{2,1}b_{g,e} &+& r_{2,2} g & +&r_{2,3}b_{g,h}\\
	+&r_{3,1}b_{h,e} &+& r_{3,2} b_{h,g} &+& r_{3,3}h
	\end{array}\end{array}, & \begin{array}{rccl}
	\ga_{1,4}\phantom{:=\mu}:~A_{1,4}&\xrightarrow{\sim}&\eps_1\B_\Q\eps_4 \\
	\begin{pmatrix}
	u_1\\u_2\\u_3
	\end{pmatrix} &\mapsto &\begin{array}{rrr} u_1b_{e,\eps_4}\\+\hspace*{1mm}u_2b_{g,\eps_4} \\+\hspace*{1mm}u_3b_{h,\eps_4}\end{array}\end{array},\\\\
	
	&\begin{array}{rccl}\ga_{2,2}\hspace*{12mm}:~A_{2,2}&\xrightarrow{\sim}&\eps_2\B_\Q\eps_2\\
	u &\mapsto&u\eps_2\end{array}, &\hspace*{-2.8cm} \begin{array}{rccl}
	\ga_{2,4}\phantom{:=\mu}:~A_{2,4}&\xrightarrow{\sim}&\eps_2\B_\Q\eps_4\\
	u &\mapsto&ub_{\eps_2,\eps_4}\end{array},\\\\
	&\begin{array}{rccl}
	\ga_{3,3}\phantom{:=\nu^{-1}}\hspace*{2.8mm}:~A_{3,3}&\xrightarrow{\sim}&\eps_3\B_\Q\eps_3\\
	u &\mapsto&u\eps_3\end{array}, &\hspace*{-2.8cm} \begin{array}{rccl}
	\ga_{4,1}\phantom{:=\mu}:~A_{4,1}&\xrightarrow{\sim}&\eps_4\B_\Q\eps_1 \\
	\begin{pmatrix}
	v_1&v_2&v_3
	\end{pmatrix} &\mapsto &  v_1b_{\eps_4,e} + v_2b_{\eps_4,g} + v_3b_{\eps_4,h} \end{array},\\\\
	&\begin{array}{rccl}
	\ga_{4,2}\phantom{:=\nu^{-1}}\hspace*{2.8mm}:~A_{4,2}&\xrightarrow{\sim}&\eps_4\B_\Q\eps_2\\
	u &\mapsto&u b_{\eps_4,\eps_2}\end{array}, &\hspace*{-2.8cm} \begin{array}{rccl}
	\ga_{4,4}\stackrel{\rm L.\ref{ga4}}{:=}\mu:~A_{4,4}&\xrightarrow{\sim}&\eps_4\B_\Q\eps_4\\
	a+b\overline{\eta}+c\overline{\xi} &\mapsto&a\eps_4+bb'_{\eps_4,\eps_4}+cb''_{\eps_4,\eps_4}.
	\end{array}\end{array}}\]

Let $\beta: \B_\Q\ti \B_\Q\rightarrow \B_\Q$ be the multiplication map on $\B_\Q$. Write \[\be_{i,j,k}:=\beta |^{\eps_i\B_\Q\eps_k}_{\eps_i\B_\Q\eps_j \ti \eps_j\B_\Q\eps_k} : \eps_i\B_\Q\eps_j \ti \eps_j\B_\Q\eps_k\rightarrow \eps_i\B_\Q\eps_k~.\]
Now, we construct $\Q$-bilinear multiplication maps $\al_{i,j,k}$ for $i,j,k\in[1,4]$ such that the following quadrangle of maps commutes.

\[
\begin{array}{l}
\xymatrix@C=50mm{
	A_{i,j}\ti A_{j,k}\ar[r]^{\al_{i,j,k}}\ar[d]_{\ga_{i,j}\ti\ga_{j,k}} & A_{i,k}\ar[d]_{\ga_{i,k}}  \\
	\eps_i\B_\Q\eps_j\ti\eps_j\B_\Q\eps_k\ar[r]^{\beta_{i,j,k}} &\eps_i\B_\Q\eps_k\\
}\\
\end{array}
\] I.e.\ we set $\al_{i,j,k}:=\ga_{i,k}^{-1}\circ\be_{i,j,k}\circ(\ga_{i,j}\ti\ga_{j,k})$. This leads to \begin{itemize}
	\item  $\al_{i,j,k}=0$\\ if $(i,j)$, $(j,k)$ or $(i,k)$ is contained in $\{(1,2),(1,3), (2,1), (2,3), (3,1), (3,2), (3,4), (4,3)\}$
	\item $\al_{1,1,1}: A_{1,1}\ti A_{1,1}\rightarrow A_{1,1}$, $(X,Y) \mapsto XY$
	
	\item $\al_{1,1,4}: A_{1,1}\ti A_{1,4}\rightarrow A_{1,4}$, $(X,u) \mapsto Xu $
	\item $\al_{1,4,1}=0$
	\item
	$\al_{1,4,4}: A_{1,4}\ti A_{4,4}\rightarrow A_{1,4}$, $(u,a+b\overline{\eta}+c\overline{\xi})\mapsto ua$
	\item
	$\al_{2,2,2}: A_{2,2}\ti A_{2,2}\rightarrow A_{2,2}$, $(u,v)\mapsto  uv$
	\item
	$\al_{2,2,4}: A_{2,2}\ti A_{2,4}\rightarrow A_{2,4}$, $(u,v) \mapsto uv$
	\item $\al_{2,4,2}=0$
	\item
	$\al_{2,4,4}: A_{2,4}\ti A_{4,4}\rightarrow A_{2,4}$, $(u,a+b\overline{\eta}+c\overline{\xi}) \mapsto ua$
	\item $\al_{3,3,3}: A_{3,3}\ti A_{3,3}\rightarrow A_{3,3}$, $	(u,v)\mapsto  uv$
	\item $\al_{4,1,1}: A_{4,1}\ti A_{1,1}\rightarrow A_{4,1}$, $(v,X)\mapsto 
	vX$
	\item
	$\al_{4,1,4}: A_{4,1}\ti A_{1,4}\rightarrow A_{4,4}$, $
	(v,u)\mapsto  
	vu\overline{\eta}$
	\item 
	$\al_{4,2,2}: A_{4,2}\ti A_{2,2}\rightarrow A_{4,2}$, $(u,v)\mapsto  uv$ 
	\item 
	$\al_{4,2,4}: A_{4,2}\ti A_{2,4}\rightarrow A_{4,4}$, $(u,v)\mapsto  uv(\overline{\xi}-12\overline{\eta})$
	\item 
	$\al_{4,4,1}: A_{4,4}\ti A_{4,1}\rightarrow A_{4,1}$, $
	(a+b\overline{\eta}+c\overline{\xi},v)\mapsto  av$
	\item $\al_{4,4,2}: A_{4,4}\ti A_{4,2}\rightarrow A_{4,2}$, $
	(a+b\overline{\eta}+c\overline{\xi}, v)\mapsto  av$
	\item $\al_{4,4,4}: A_{4,4}\ti A_{4,4}\rightarrow A_{4,4}$, $
	(a+b\overline{\eta}+c\overline{\xi},\tilde a+\tilde b\overline{\eta}+\tilde c\overline{\xi} )\mapsto   (a+b\overline{\eta}+c\overline{\xi})\cdot(\tilde a+\tilde b\overline{\eta}+\tilde c\overline{\xi} )$ where $a,b,c,\tilde a,\tilde b,\tilde c\in\Q$
\end{itemize} For convenience, we fix a notation similar to matrices and matrix multiplication.
\begin{Notation} Suppose given $r\in\Z_{\geq 0}\ab$. Suppose given $R$-modules $M_{i,j}$ for $i,j\in [1,r]$. 
	We write \[
	\bigoplus\limits_{i,j\in [1,r]}M_{i,j}=: \left[\begin{array}{ccccccc}
	M_{1,1} & & M_{1,2} && \dots && M_{1,r} \\
	M_{2,1} & & M_{2,2} && \dots && M_{2,r} \\
	\vdots  & & \vdots  & & \dots && \vdots\\
	M_{r,1} & & M_{r,2} && \dots && M_{r,r} \\ \end{array}\right]~.\]
	
	Accordingly, elements of this direct sum are written as matrices with entries in the respective summands, i.e.\ in the form $[m_{i,j}]_{i,j}$ with $m_{i,j}\in M_{i,j}$ for $i,j\in [1,r]$. \end{Notation}
\begin{Proposition}\label{gamma}Let \[A:=\bigoplus\limits_{i,j\in[1,4]}A_{i,j} =\left[\enger\begin{array}{lllllll}
	\phantom{(}A_{1,1} & & A_{1,2} & & A_{1,3} & & A_{1,4} \\
	\phantom{(}A_{2,1} & & A_{2,2} & & A_{2,3} & & A_{2,4} \\
	\phantom{(} A_{3,1} & & A_{3,2} & & A_{3,3} & & A_{3,4} \\
	\phantom{(} A_{4,1} & & A_{4,2} & & A_{4,3} & & A_{4,4} \\
	\end{array}\right]=
	\left[\enger\begin{array}{lcccccl}
	\Q^{3\ti 3} & & 0 & & 0 & & \Q^{3\ti 1} \\
	~0 & & \Q & & 0 & & \Q \\
	~0 & & 0 & & \Q & & ~0 \\
	\Q^{1\ti 3} & & \Q & & 0 & & \Q[\overline{\eta},\overline{\xi}] 
	\end{array}\right]~.\] Define the multiplication \[\begin{array}{rccccl} 
	A&\times & A &\rightarrow &A\\
	([a_{i,j}]_{i,j}&,&[a'_{s,t}]_{s,t}) &\mapsto & [\sum\limits_{r\in[1,4]} \al_{i,r,j}(a_{i,r},a'_{r,j})]_{i,j}
	\end{array}~.\] 
	We obtain a $\Q$-algebra isomorphism 
	\begin{center}
		$\begin{array}{rcl} A&\rxisoa{\gamma} &\B_\Q(\sd,\sd)\\\\
		
		[a_{i,j}]_{i,j\in[1,4]}&\mapsto&\sum\limits_{i,j\in[1,4]}\ga_{i,j}(a_{i,j})~.\end{array}$ 
	\end{center} 
	\end{Proposition}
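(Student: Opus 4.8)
The multiplication on $A$ has been set up precisely so that $\gamma=\bigoplus_{i,j\in[1,4]}\ga_{i,j}$ becomes multiplicative, so the proof is essentially a transport of structure; beyond what is displayed in \S\ref{Ab23} I only need standard properties of Peirce decompositions. First I would record that, by \S\ref{Ab23}, $1_{\B_\Q}=\eps_1+\eps_2+\eps_3+\eps_4$ with $\eps_1:=e+g+h$ is an orthogonal decomposition of $1_{\B_\Q}$ into idempotents of $\B_\Q$: the pairwise orthogonality and idempotency of $e,g,h,\eps_2,\eps_3,\eps_4$ can be read off the multiplication table, whence $\eps_1$ is idempotent and orthogonal to $\eps_2,\eps_3,\eps_4$, and the sum is $1_{\B_\Q}$ by \S\ref{Ab23}. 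Hence $\B_\Q=\bigoplus_{i,j\in[1,4]}\eps_i\B_\Q\eps_j$ as $\Q$-modules, and for $x\in\eps_i\B_\Q\eps_r$ and $y\in\eps_s\B_\Q\eps_j$ we have $xy=\eps_i\,x\,(\eps_r\eps_s)\,y\,\eps_j$, which is $0$ if $r\neq s$ and lies in $\eps_i\B_\Q\eps_j$ if $r=s$. Consequently, for $x=\sum_{i,r}x_{i,r}$ and $y=\sum_{s,j}y_{s,j}$ with $x_{i,r}\in\eps_i\B_\Q\eps_r$ and $y_{s,j}\in\eps_s\B_\Q\eps_j$, the $(i,j)$-Peirce component of $xy$ in $\B_\Q$ equals $\sum_{r\in[1,4]}x_{i,r}y_{r,j}$.

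Next I would check that $\gamma$ is a $\Q$-linear bijection. It is $\Q$-linear by construction, and it carries the direct summand $A_{i,j}$ of $A$ onto the direct summand $\eps_i\B_\Q\eps_j$ of $\B_\Q$ via $\ga_{i,j}$. Each $\ga_{i,j}$ is an isomorphism: for $(i,j)\neq(4,4)$ this is immediate from the definitions in \S\ref{Ab23}, where a $\Q$-basis of $A_{i,j}$ is sent to the chosen $\Q$-basis of $\eps_i\B_\Q\eps_j$ (in particular $\eps_s\B_\Q\eps_t=0$ for the eight pairs $(s,t)$ with $A_{s,t}=0$ and $\ga_{s,t}=0$), while $\ga_{4,4}=\mu$ is an isomorphism by \Cref{ga4}. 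Since $A=\bigoplus_{i,j}A_{i,j}$ and $\B_\Q=\bigoplus_{i,j}\eps_i\B_\Q\eps_j$, it follows that $\gamma$ is bijective.

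Then I would verify multiplicativity. Recall that $\al_{i,r,j}$ was defined so that the quadrangle with $\ga_{i,r}\ti\ga_{r,j}$, $\al_{i,r,j}$, $\be_{i,r,j}$ and $\ga_{i,j}$ commutes; explicitly, $\ga_{i,j}\bigl(\al_{i,r,j}(a,a')\bigr)=\ga_{i,r}(a)\cdot\ga_{r,j}(a')$ in $\B_\Q$ for $a\in A_{i,r}$, $a'\in A_{r,j}$. Now let $[a_{i,j}]_{i,j},[a'_{s,t}]_{s,t}\in A$. Using the Peirce-block formula from the first paragraph together with $\ga_{i,r}(a_{i,r})\in\eps_i\B_\Q\eps_r$, the $(i,j)$-Peirce component of $\gamma\bigl([a_{i,j}]\bigr)\cdot\gamma\bigl([a'_{s,t}]\bigr)$ is
\[
\sum_{r\in[1,4]}\ga_{i,r}(a_{i,r})\,\ga_{r,j}(a'_{r,j})
\;=\;\sum_{r\in[1,4]}\ga_{i,j}\bigl(\al_{i,r,j}(a_{i,r},a'_{r,j})\bigr)
\;=\;\ga_{i,j}\Bigl(\,\sum_{r\in[1,4]}\al_{i,r,j}(a_{i,r},a'_{r,j})\,\Bigr),
\]
which is exactly $\ga_{i,j}$ applied to the $(i,j)$-entry of the product $[a_{i,j}]_{i,j}\cdot[a'_{s,t}]_{s,t}$ prescribed in the statement. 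Summing over $i,j$ gives $\gamma\bigl([a_{i,j}]\cdot[a'_{s,t}]\bigr)=\gamma\bigl([a_{i,j}]\bigr)\cdot\gamma\bigl([a'_{s,t}]\bigr)$.

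Finally I would conclude: since $\gamma$ is a multiplicative $\Q$-linear bijection onto the associative unital $\Q$-algebra $\B_\Q$, the multiplication on $A$ is associative (apply $\gamma$ to $(x\cdot y)\cdot z$ and to $x\cdot(y\cdot z)$ and use injectivity of $\gamma$), and $1_A:=\gamma^{-1}(1_{\B_\Q})$ — concretely the element carrying the identity matrix in the $(1,1)$-slot, $1$ in the $(2,2)$- and $(3,3)$-slots, $1=1+0\overline{\eta}+0\overline{\xi}$ in the $(4,4)$-slot and $0$ elsewhere, since $\ga_{1,1}$ of the identity matrix is $e+g+h=\eps_1$ while $\ga_{2,2}(1)=\eps_2$, $\ga_{3,3}(1)=\eps_3$, $\ga_{4,4}(1)=\eps_4$ — is a two-sided identity for $A$. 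Hence $A$ is a $\Q$-algebra and $\gamma$ a $\Q$-algebra isomorphism. I do not expect any genuine obstacle inside this argument: the real content sits in the earlier Magma-supported computations — the idempotent decomposition, the Peirce bases, and the passage from the defining commuting squares to the explicit formulas for the $\al_{i,r,j}$ listed in \S\ref{Ab23} — and the only point in the present proof that needs a little care is the bookkeeping of the vanishing blocks $A_{s,t}=0$, where the relevant $\al_{i,r,j}$ are zero maps and the matching Peirce components $\eps_s\B_\Q\eps_t$ vanish, so all the formulas remain consistent.
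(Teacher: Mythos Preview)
Your proposal is correct and matches the paper's approach: the paper states the proposition without proof, relying on the fact that the $\al_{i,r,j}$ were defined precisely via the commuting squares so that $\gamma$ is multiplicative by construction. You have simply written out the transport-of-structure argument that the paper leaves implicit.
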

	
 \subsection{\texorpdfstring{$\B_\Q(\sd,\sd)$}{B\_Q(S\textthreeinferior,S\textthreeinferior)} as path algebra modulo relations}
We aim to write $\B_\Q=\B_\Q(\sd,\sd)\cong A$, up to Morita equivalence, as a path algebra modulo relations.\\ 
We denote by  $\e_{i,j}\in A_{1,1}=\Q^{3\ti 3}$ the elements that have a single non-zero entry $1$ at position $(i,j)$.
We have $a_{1,1}:=\ga^{-1}(e)=\e_{1,1}\in\Q^{3\ti 3}\subseteq A$, $\ga^{-1}(g)=\e_{2,2}\in\Q^{3\ti 3}\subseteq A$, $\ga^{-1}(e)=\e_{3,3}\in\Q^{3\ti 3}\subseteq A$ and $a_{k,k}:=\ga^{-1}(\eps_k)$ for $k\in [2,4]$, cf.\ \Cref{gamma}. 

We have $Aa_{1,1}\cong A \e_{2,2}$ as $A$-modules, using multiplication with $\e_{1,2}$ from the right from $Aa_{1,1}$ to $A\e_{2,2}$ and multiplication with $\e_{2,1}$ from the right from $A\e_{2,2}$ to $Aa_{1,1}$. Note that $\e_{1,2}\e_{2,1}=a_{1,1}$ and $\e_{2,1}\e_{1,2}=\e_{2,2}$. Similarly $Aa_{1,1}\cong A\e_{3,3}$. 

Therefore, $A$ is Morita equivalent to 
\[
A':= {\scriptsize(\sum\limits_{i\in [1,4]}a_{i,i})A(\sum\limits_{i\in [1,4]}a_{i,i})=\bigoplus\limits_{i,j\in[1,4]}a_{i,i}Aa_{j,j}=\bigoplus\limits_{i,j\in[1,4]}a_{i,i}A_{i,j}a_{j,j}} \; .
\]

Write $A'_{i,j}:=a_{i,i}A_{i,j}a_{j,j}=A_{i,j}$ for $i,j\in[2,4]$.\\ Identify $A'_{1,1}:=\Q=\scriptsize\left(\begin{array}{ccc}\Q &0 & 0\\ 0&0&0\\0&0&0\end{array}\right)=a_{1,1}A_{1,1}a_{1,1}\subseteq A_{1,1}=\Q^{3\ti 3}$.\\ Identify $A'_{1,4}:=\Q=\scriptsize\left(\begin{array}{ccc}\Q\\ 0\\0\end{array}\right)=a_{1,1}A_{1,4}a_{4,4}\subseteq A_{1,4}=\Q^{3\ti 1}$.\\ Identify $A'_{4,1}:=\Q=\scriptsize\left(\begin{array}{ccc}\Q &0 & 0\end{array}\right)=a_{4,4}A_{4,1}a_{1,1}\subseteq A_{4,1}=\Q^{1\ti 3}$.  Let $A'_{1,j}:=0$ and $A'_{j,1}:=0$ for $j\in[2,3]$. 

We have the $\Q$-linear basis of $A'$
\[\footnotesize\begin{array}{rclcrcl}
a_{1,1}\phantom{:}= \left[\begin{array}{lllllll}
1 & & 0 & & 0 & & 0 \\
0 & & 0 & & 0 & & 0 \\
0 & & 0 & & 0 & & 0 \\
0 & & 0 & & 0 & & 0
\end{array}\right] &
a_{2,2}\phantom{:}=\left[\begin{array}{lllllll}
0 & & 0 & & 0 & & 0 \\
0 & & 1 & & 0 & & 0 \\
0 & & 0 & & 0 & & 0 \\
0 & & 0 & & 0 & & 0
\end{array} \right] &
a_{3,3\phantom{:}}= \left[\begin{array}{lllllll}
0 & & 0 & & 0 & & 0 \\
0 & & 0 & & 0 & & 0 \\
0 & & 0 & & 1 & & 0 \\
0 & & 0 & & 0 & & 0
\end{array}\right]  \\
a_{4,4}\phantom{:}= \left[\begin{array}{lllllll}
0 & & 0 & & 0 & & 0 \\
0 & & 0 & & 0 & & 0 \\
0 & & 0 & & 0 & & 0 \\
0 & & 0 & & 0 & & 1
\end{array}\right] &
a_{1,4} :=\left[\begin{array}{lllllll}
0 & & 0 & & 0 & & 1 \\
0 & & 0 & & 0 & & 0 \\
0 & & 0 & & 0 & & 0 \\
0 & & 0 & & 0 & & 0
\end{array}\right] &
a_{4,1}:= \left[\begin{array}{lllllll}
0 & & 0 & & 0 & & 0 \\
0 & & 0 & & 0 & & 0 \\
0 & & 0 & & 0 & & 0 \\
1 & & 0 & & 0 & & 0
\end{array}\right] \\
a_{2,4}:= \left[\begin{array}{lllllll}
0 & & 0 & & 0 & & 0 \\
0 & & 0 & & 0 & & 1 \\
0 & & 0 & & 0 & & 0 \\
0 & & 0 & & 0 & & 0
\end{array}\right]  &
a_{4,2}:=\left[\begin{array}{lllllll}
0 & & 0 & & 0 & & 0 \\
0 & & 0 & & 0 & & 0 \\
0 & & 0 & & 0 & & 0 \\
0 & & 1 & & 0 & & 0
\end{array}\right]  &
a'_{4,4}:=  \left[\begin{array}{lllllll}
0 & & 0 & & 0 & & 0 \\
0 & & 0 & & 0 & & 0 \\
0 & & 0 & & 0 & & 0 \\
0 & & 0 & & 0 & & \oeta
\end{array}\right]  \\
a''_{4,4}:= \left[\begin{array}{lllllll}
0 & & 0 & & 0 & & 0 \\
0 & & 0 & & 0 & & 0 \\
0 & & 0 & & 0 & & 0 \\
0 & & 0 & & 0 & & \oxi
\end{array}\right]  \\
\end{array}\]
We have the following multiplication table for the basis elements. 
\hspace*{-2cm}\[\footnotesize {\setlength{\arraycolsep}{2.9pt}
	\begin{array}{c|c|c|c|c|c|c|c|c|c|c}
	(\cdot) & a_{1,1} & a_{1,4} &a_{2,2} & a_{2,4}  & a_{3,3} & a_{4,1} &  a_{4,2} &  a_{4,4} &a'_{4,4} & a''_{4,4}\\ \hline    
	
	a_{1,1} & a_{1,1} & a_{1,4} &0 &0 & 0 &0 &0 & 0 &0 &0  \\ \hline
	
	a_{1,4}&0 &0 &0 & 0 &0 &0 &0 &a_{1,4} &0 &0 \\ \hline
	
	a_{2,2} &0   & 0&a_{2,2} &a_{2,4} &0 & 0 &0 &0 &0 &0  \\ \hline
	
	a_{2,4} &0 &0 &0 & 0 &0 &0 &0 &a_{2,4} &0 &0 \\ \hline
	
	a_{3,3} &0   & 0&0 &0 &a_{3,3} & 0 &0 &0 &0 &0  \\ \hline
	
	a_{4,1} & a_{4,1}  & a'_{4,4}&0   & 0&0 &0 &0 & 0 &0 &0   \\ \hline
	
	a_{4,2} &0   & 0&a_{4,2} &a''_{4,4}-12a'_{4,4} &0 & 0 &0 &0 &0 &0  \\ \hline
	
	a_{4,4} &0   & 0&0 &0 &0 & a_{4,1}&a_{4,2} &a_{4,4} &a'_{4,4} &a''_{4,4} \\ \hline
	
	a'_{4,4} &0 &0 &0 & 0 &0 &0 &0 &a'_{4,4} &0 &0 \\ \hline
	
	a''_{4,4} &0 &0 &0 & 0 &0 &0 &0 &a''_{4,4} &0 &0 \\
	\end{array}}
\]
We have $a'_{4,4} = a_{4,1}\cdot a_{1,4}$ and $a''_{4,4} =a_{4,2}\cdot a_{2,4} + 12a_{4,1}\cdot a_{1,4}$ . Hence, as a $\Q$-algebra $A'$ is generated by $a_{1,1},a_{2,2},a_{3,3},a_{4,4}, a_{1,4}, a_{4,1}, a_{2,4}, a_{4,2}$ .

Consider the quiver
$\Psi:=\left[{\xymatrix@C-0.5em@R-0.5em{
		\tilde a_{3,3} & \tilde a_{2,2}\ar@/^1pc/[rr]^-{\hspace{-0.7mm}\sigma} & &\tilde a_{4,4}\ar@/^1pc/[ll]^-{\hspace{-0.7mm}\vartheta}\ar@/_1pc/[rr]_-{\hspace{-0.7mm}\rho} & &  \tilde a_{1,1}\ar@/_1pc/[ll]_-{\hspace{-0.7mm}\pi}
}}\right]~.$

We have a surjective $\Q$-algebra morphism $\phi: \Q\Psi\rightarrow A'$ by sending 
\[\begin{array}{rccccccccccccccccl}
\tilde a_{1,1} &\mapsto& a_{1,1} &,&
\tilde a_{2,2} &\mapsto& a_{2,2} &, & 
\tilde a_{3,3} &\mapsto& a_{3,3}  &, &
\tilde a_{4,4} &\mapsto& a_{4,4}  &,&\\
\rho &\mapsto& a_{4,1}&,&
\pi &\mapsto& a_{1,4}&,&
\vartheta &\mapsto& a_{4,2}&,&
\sigma &\mapsto& a_{2,4}&.&
\end{array}\]

We establish the following multiplication trees, where we underline the elements that are not in a $\Q$-linear relation with previously underlined elements.

\[{\xymatrix@C-0.5em@R-0.5em{
		\ul{a_{1,1}}\ar[rr]_-{\hspace{-0.5mm}a_{1,4}}& & {\ul{a_{1,4}}}\ar[d]_-{\hspace{-0.5mm}a_{4,2}}\ar[rr]_-{\hspace{-0.5mm}a_{4,1}} && a_{1,4}a_{4,1}=0\\ 
		&  &a_{1,4}a_{4,2}=0 & & & \\
}}{\xymatrix@C-0.5em@R-0.5em{
		\ul{a_{2,2}}\ar[rr]_-{\hspace{-0.5mm}a_{2,4}}& & {\ul{a_{2,4}}}\ar[d]_-{\hspace{-0.5mm}a_{4,1}}\ar[rr]_-{\hspace{-0.5mm}a_{4,2}} && a_{2,4}a_{4,2}=0\\ 
		&  &a_{2,4}a_{4,1}=0 & & & \\
}}\]

\[{\xymatrix@C-0.5em@R-0.5em{
		& & &    \ul{a_{4,4}}\ar[dr]_-{\hspace{-0.5mm}a_{4,1}}\ar[dl]_-{\hspace{-0.5mm}a_{4,2}} & & & \\
		&(a''_{4,4}-12a'_{4,4})a_{4,1}=0 & \ul{a_{4,2}}\ar[dl]_-{a_{2,4}} & &\ul{a_{4,1}}\ar@/^1pc/[r]^-{\hspace{-0.5mm}a_{1,4}} & {\ul{a_{4,1}a_{1,4}}}\ar[d]_-{\hspace{-0.5mm}a_{4,2}}\ar[rr]_-{\hspace{-0.5mm}a_{4,1}} & &a'_{4,4}a_{4,1}=0 \\
		& {\ul{a_{4,2}a_{2,4}}}\ar[d]_-{\hspace{-0.5mm}a_{4,2}}\ar[u]_-{\hspace{-0.5mm}a_{4,1}}& & & & a'_{4,4}a_{4,2}=0  &\\
		& (a''_{4,4}-12a'_{4,4})a_{4,2}=0 & & & & & & & & & \\
}}\]

The multiplication tree of the idempotent $a_{3,3}$ consists only of the element $a_{3,3}$.

So the kernel of $\varphi$ contains the elements:
\begin{center}
	$\begin{array}{cccccccc}
	\pi\rho   &, &\sigma\vartheta &, & \rho\pi \rho&,&\vartheta\sigma \rho~, \\
	\pi\vartheta&, &\sigma\rho &, & \rho\pi \vartheta&,&\vartheta\sigma \vartheta~.\\
	\end{array}$ 
\end{center}

Let $I$ be the ideal in $\Q\Psi$ generated by those elements. So $I\subseteq\kernel(\phi)$. Therefore, $\phi$ induces a surjective $\Q$-algebra morphism from $\Q\Psi/I$ to $A'$ .

Note that $\Q\Psi / I$ is $\Q$-linearly generated by \[\mathcal{N}:=\{\tilde a_{3,3}+I, \tilde a_{2,2}+I,\tilde a_{4,4}+I, \tilde a_{1,1}+I, \sigma+I,\pi+I,\vartheta+I,\rho+I, \vartheta\sigma+I, \rho\pi+I\},\] cf.\ the underlined elements above. 
To see that, note that a product $\xi$ of $k$ generators may be written as a product in $\mathcal{N}$ of $k'$ generators and a product of $k''$ generators, where $k=k'+k''$ and where $k'$ is chosen maximal. We call $k''$ the excess of $\xi$. If $k''\geq 1$ then, using the trees above, we may write $\xi$ as an $\Q$-linear combination of products of generators that have excess $\leq k''-1$. In the present case, we even have $\xi=0$.

Moreover, note that  $|\mathcal{N}|=10=\dim_\Q(A')$. 

Since we have a surjective $\Q$-algebra morphism from $\Q\Psi /I$ to $A'$, this dimension argument shows this morphism to be bijective. In particular, $I=\kernel(\phi)$.

We may reduce this list to obtain $\kernel(\phi)= (\pi\rho, \sigma\vartheta,\pi\vartheta,\sigma\rho)$. So we obtain the 

\begin{Proposition}\label{pfads3}  Recall that $I=  (\pi\rho, \sigma\vartheta,\pi\vartheta,\sigma\rho)$. We have the isomorphism of $\Q$-algebras 
	
	\[\begin{array}{rcl}
	A' &\xrightarrow{\sim}&\Q\left[{\xymatrix@C-0.5em@R-0.5em{
			\tilde a_{3,3} & \tilde a_{2,2}\ar@/^1pc/[rr]^-{\hspace{-0.7mm}\sigma} & &\tilde a_{4,4}\ar@/^1pc/[ll]^-{\hspace{-0.7mm}\vartheta}\ar@/_1pc/[rr]_-{\hspace{-0.7mm}\rho} & &  \tilde a_{1,1}\ar@/_1pc/[ll]_-{\hspace{-0.7mm}\pi}
	}} \right]/I =\Q\Psi /I\\
	a_{1,1}&\mapsto&\tilde a_{1,1}+ I \\
	a_{2,2} &\mapsto&\tilde a_{2,2}+I \\
	a_{3,3} &\mapsto& \tilde a_{3,3}+I \\
	a_{4,4} &\mapsto&\tilde a_{4,4}+I \\
	a_{4,1} &\mapsto& \rho+ I\\
	a_{1,4} &\mapsto& \pi +I\\
	a_{4,2} &\mapsto& \vartheta+I\\
	a_{2,4}&\mapsto& \sigma +I ~.\\
	\end{array}\]
	
	In particular, $\Q\Psi/I$ is Morita equivalent to $A\cong\B_\Q(\sd,\sd)$.
\end{Proposition}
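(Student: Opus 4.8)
The plan is to identify $A'$ with $\Q\Psi/I$ by showing that the surjective $\Q$-algebra morphism $\phi\colon\Q\Psi\to A'$ constructed above has kernel exactly $I=(\pi\rho,\sigma\vartheta,\pi\vartheta,\sigma\rho)$. Granting this, $\phi$ induces the asserted isomorphism $A'\xrightarrow{\sim}\Q\Psi/I$ sending the basis elements as listed; and since $A'$ is Morita equivalent to $A\cong\B_\Q(\sd,\sd)$ by \Cref{gamma}, the closing assertion about Morita equivalence is immediate.

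First I would verify $I\subseteq\kernel(\phi)$. Applying the algebra morphism $\phi$ to the four generators of $I$ yields $\phi(\pi\rho)=a_{1,4}a_{4,1}$, $\phi(\sigma\vartheta)=a_{2,4}a_{4,2}$, $\phi(\pi\vartheta)=a_{1,4}a_{4,2}$ and $\phi(\sigma\rho)=a_{2,4}a_{4,1}$, and all four products vanish by the multiplication table for the $\Q$-linear basis of $A'$ given above. Hence $\phi$ factors through a $\Q$-algebra morphism $\overline\phi\colon\Q\Psi/I\to A'$, which is surjective because $\phi$ is: the image of $\phi$ contains the generating set $a_{1,1},a_{2,2},a_{3,3},a_{4,4},a_{1,4},a_{4,1},a_{2,4},a_{4,2}$ of $A'$, and also $a'_{4,4}=a_{4,1}a_{1,4}$ and $a''_{4,4}=a_{4,2}a_{2,4}+12a_{4,1}a_{1,4}$.

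Next I would bound $\dim_\Q(\Q\Psi/I)$ from above by $10$ by checking that the ten cosets in $\mathcal{N}$ span $\Q\Psi/I$. The quiver $\Psi$ has four vertices, the four arrows $\sigma,\vartheta,\rho,\pi$, and exactly six composable pairs of arrows, namely $\pi\rho,\sigma\vartheta,\pi\vartheta,\sigma\rho$ (the four generators of $I$) together with $\vartheta\sigma$ and $\rho\pi$; so every path of length $2$ either lies in $I$ or equals $\vartheta\sigma$ or $\rho\pi$. There are exactly eight paths of length $3$, obtained by appending an arrow to $\vartheta\sigma$, to $\rho\pi$, or to one of the four length-$2$ paths lying in $I$; appending to the latter keeps the path in $I$, while $\vartheta\sigma\vartheta=\vartheta\cdot(\sigma\vartheta)$, $\vartheta\sigma\rho=\vartheta\cdot(\sigma\rho)$, $\rho\pi\vartheta=\rho\cdot(\pi\vartheta)$ and $\rho\pi\rho=\rho\cdot(\pi\rho)$ are ideal multiples of generators of $I$, hence also in $I$. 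Since every path of length $>3$ contains a length-$3$ subpath, all paths of length $\geq 3$ lie in $I$. Thus the cosets of the paths of length $\leq 2$ not in $I$ — precisely the ten elements of $\mathcal{N}$ — span $\Q\Psi/I$, so $\dim_\Q(\Q\Psi/I)\leq 10=\dim_\Q(A')$. Combined with the surjectivity of $\overline\phi$, this forces $\overline\phi$ to be bijective, whence $I=\kernel(\phi)$.

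Finally, the identities just used, $\rho\pi\rho=\rho\cdot(\pi\rho)$, $\vartheta\sigma\rho=\vartheta\cdot(\sigma\rho)$, $\rho\pi\vartheta=\rho\cdot(\pi\vartheta)$ and $\vartheta\sigma\vartheta=\vartheta\cdot(\sigma\vartheta)$, show that the four redundant members of the original eight-element list of elements of $\kernel(\phi)$ already lie in the two-sided ideal $(\pi\rho,\sigma\vartheta,\pi\vartheta,\sigma\rho)$, which is the reduction asserted in the statement. I do not expect a genuine obstacle here, since the hard input — the Peirce decomposition and the multiplication tables — has already been produced by Magma; the one place calling for care is the enumeration of paths underlying the bound $\dim_\Q(\Q\Psi/I)\leq 10$, which is where bookkeeping errors would most easily creep in.
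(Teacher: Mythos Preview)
Your argument is correct and follows essentially the same route as the paper: verify $I\subseteq\kernel(\phi)$, show that $\Q\Psi/I$ is spanned by the ten cosets in $\mathcal{N}$, and conclude by the dimension comparison with $\dim_\Q(A')=10$. The only cosmetic difference is that you work directly with the reduced four-generator ideal and enumerate paths explicitly, whereas the paper first records eight relations via multiplication trees and an ``excess'' argument before reducing to the same four generators.
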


\section[The double Burnside $R$-algebra \texorpdfstring{$\B_R(\sd,\sd)$}{B\_R(S\textthreeinferior,S\textthreeinferior)} for \texorpdfstring{$R\in\{\Z,\Z_{(2)},\F_{2},\Z_{(3)},\F_{3}\}$}{R\in\{Z,Z\_{(2)},F\_2,Z\_(3),F\_3\}}]{The double Burnside $R$-algebra $\B_R(\sd,\sd)$ for\\ $R\in\{\Z,\Z_{(2)},\F_{2},\Z_{(3)},\F_{3}\}$}\label{int}
\subsection{\texorpdfstring{$\B_\Z(\sd,\sd)$}{B\_Z(S\textthreeinferior,S\textthreeinferior)} via congruences}\label{congruences}
Recall that 
$ A=\bigoplus\limits_{i,j\in[1,4]}A_{i,j}\xrightarrow[\gamma]{\sim} \B_\Q$ ,
cf.\ \Cref{gamma}. 
In the $\Q$-algebra $A$, we define the $\Z$-order \begin{center} $A_\Z := \left[\begin{array}{lllllll}
	{A_\Z}_{,1,1} && {A_\Z}_{,1,2} && {A_\Z}_{,1,3} && {A_\Z}_{,1,4} \\
	{A_\Z}_{,2,1} && {A_\Z}_{,2,2} && {A_\Z}_{,2,3} && {A_\Z}_{,2,4} \\
	{A_\Z}_{,3,1} && {A_\Z}_{,3,2} && {A_\Z}_{,3,3} && {A_\Z}_{,3,4} \\
	{A_\Z}_{,4,1} && {A_\Z}_{,4,2} && {A_\Z}_{,4,3} && {A_\Z}_{,4,4} \\
	\end{array}\right]:= \left[\begin{array}{lllllll}
	\Z^{3\ti 3} && 0 && 0 && \Z^{3\ti 1} \\
	\phantom{(}   0 && \Z && 0 && \Z \\
	\phantom{(}   0 && 0 && \Z && 0 \\
	\phantom{(}  \Z^{1\ti 3} && \Z && 0 && \Z[\overline{\eta},\overline{\xi}] 
	\end{array}\right]\subseteq A$ . \end{center}

In fact, $A_\Z$ is a subring of $A$, as $\al_{i,j,k}({{A_\Z}_{,i,j}\ti{A_\Z}_{,j,k}})\subseteq {A_\Z}_{,i,k}$ for $i,j,k\in [1,4]$.

\begin{Remark}\rm\label{maxorder}
	As $A\cong \B_\Q$ is not semisimple, there are no maximal $\Z$-orders in $A$, \cite[\S 10]{Reiner}.
	So $A_\Z$ is not a canonical choice of a $\Z$-order in $A$, but it nonetheless enables us to describe $\Lambda$ inside $A_\Z$ via congruences.
\end{Remark}

Consider the following elements of $\UU(A)$.\[\footnotesize\begin{array}{rccccccccl}
x_1&: =&{\setlength{\arraycolsep}{2pt}\left[\begin{array}{rrrrrrrrrrc}
	0 && -2 && 0  && 0 && 0 && 0 \\
	6 && 6 && -4&& 0 && 0 && 0 \\
	0 && 0 && 1  && 0 && 0 && 0 \\
	0       && 0        &&      0   && 1&& 0 && 0\\ 
	0       && 0        &&      0   && 0 &&   1&&0\\ 
	0&& 0 && 0  && 0  && 0 && 1
	\end{array}\right]}~, &
x_2&: =& {\setlength{\arraycolsep}{2pt}\left[\begin{array}{rrrrrrrrrrc}
	1 && 0 && 0  && 0 && 0 && 0 \\
	0 && 1 && 0&& 0 && 0 && 0 \\
	0 && 7 && 1  && 0 && 0 && 0 \\
	0       && 0        &&      0   && 1&& 0 && 0\\ 
	0       && 0        &&      0   && 0 &&   1&&0\\ 
	0&& 0 && 0  && 0  && 0 && 1
	\end{array}\right]}~, & 
x_3&: =& {\setlength{\arraycolsep}{2pt}\left[\begin{array}{rrrrrrrrrrc}
	1 && 0 && 0  && 0 && 0 && 0 \\
	0 && 1 && 0&& 0 && 0 && 0 \\
	0 && 0 && 1  && 0 && 0 && 1 \\
	0       && 0        &&      0   && 1&& 0 && 0\\ 
	0       && 0        &&      0   && 0 &&   1&&0\\ 
	0&& 0 && 6  && 0  && 0 && 1
	\end{array}\right]}~. 
\end{array}\] We define the injective ring morphism $\delta: \B_\Z\rightarrow A,~ y\mapsto x_3^{-1}\cdot x_2^{-1}\cdot x_1^{-1}\cdot\ga^{-1}(y)\cdot x_1\cdot x_2\cdot x_3~.$ The conjugating element $x_1$ was constructed  such that the its image lies in $A_\Z$. The elements $x_2,~x_3$ serve the purpose of simplifying the congruences of $\delta(\B_\Z)$.

\begin{Theorem}\label{Proplamb}
	The image $\delta(\B_\Z)$ in $A_\Z$ is given by \begin{center}
		\mbox{$\Lambda:=\delta(\B_\Z)={\renewcommand{\arraystretch}{1.0}\setlength{\arraycolsep}{1pt}\left\{\begin{array}{rcl}
				\raisebox{20mm}{${\setlength{\arraycolsep}{2pt}\left[\begin{array}{cccccc}
							s_{1,1} & s_{1,2}  & s_{1,3}  & ~0~ & ~0~ & t_1 \\
							s_{2,1} & s_{2,2}  & s_{2,3}  & ~0~ & ~0~ & t_2 \\
							s_{3,1} & s_{3,2}  & s_{3,3}  & ~0~ & ~0~ & t_3 \\
							0       & 0        &      0   & ~u~& ~0~ & v\\ 
							0       & 0        &      0   &~ 0~ &   ~w~&0\\ 
							x_1& x_2 & x_3  & ~y~  & ~0~ & z_1+ z_2\oeta+ z_3\oxi
						\end{array}\right]}$}&\raisebox{20mm}{$\in A_\Z :$}&\begin{array}{ccccccc} 
				2w-2z_1&\equiv_8 &z_2 &\equiv_4& z_3 &\equiv_4& 0  \\
				x_1 &\equiv_4&0\\
				x_2 &\equiv_4&0\\
				x_3 &\equiv_4&0\\
				y &\equiv_2&0\\
				t_1 &\equiv_2&0\\
				t_2 &\equiv_2&0\\
				t_3 &\equiv_2&0\\
				v &\equiv_2&0\\[0.5cm]
				x_1 &\equiv_3&0\\
				x_2 &\equiv_3&0\\
				x_3 &\equiv_3&0\\
				z_2 &\equiv_3&0\\
				\end{array}\end{array}\right\} }~.$}
	\end{center}
	In particular, we have $\B_\Z=\B_\Z(\sd,\sd)\cong\Lambda$ as rings.
\end{Theorem}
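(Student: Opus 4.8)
The plan is to deduce everything from the set-theoretic identity $\delta(\B_\Z)=\Lambda$ inside $A_\Z$: once that is known, the ring isomorphism $\B_\Z\cong\Lambda$ is immediate, since $\delta$ is an injective ring morphism $\B_\Z\to A$ and therefore corestricts to a ring isomorphism $\B_\Z\xrightarrow{\sim}\delta(\B_\Z)=\Lambda$. So I would concentrate on proving $\delta(\B_\Z)=\Lambda$. Both sides are $\Z$-sublattices of $A_\Z$ of rank $22=\rk_\Z\B_\Z=\rk_\Z A_\Z$: the lattice $\delta(\B_\Z)$ has full rank because $\delta$ extends to the $\Q$-algebra isomorphism $\B_\Q\to A$ obtained by composing $\gamma^{-1}$ (\Cref{gamma}) with conjugation by the unit $x_1x_2x_3\in\UU(A)$, so $\delta(\B_\Z)$ spans $A$ over $\Q$; and $\Lambda$ has full rank because it is cut out of $A_\Z$ by finitely many congruences. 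I would then prove the equality via the inclusion $\delta(\B_\Z)\subseteq\Lambda$ together with an equality of (finite) indices $[A_\Z:\delta(\B_\Z)]=[A_\Z:\Lambda]$.

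For the inclusion $\delta(\B_\Z)\subseteq\Lambda$ I would use that $\Lambda$ is an additive subgroup of $A_\Z$ — being the intersection of $A_\Z$ with the kernels of finitely many reductions of $\Z$-linear coordinate forms — so it suffices to check $\delta(H)\in\Lambda$ for each of the $22$ basis elements $H\in\mathcal{H}$. To make $\delta(H)$ explicit I would first invert, over $\Q$, the base change recorded in \S\ref{Ab23} and in the subsequent table of Peirce bases (expressing $e,g,h,b_{e,g},\dots,\eps_2,\eps_3,\eps_4,b'_{\eps_4,\eps_4},b''_{\eps_4,\eps_4}$ in terms of $\mathcal{H}$), obtaining $\gamma^{-1}(H)\in A$; then conjugate by $x_1x_2x_3$ to obtain $\delta(H)$. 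This is a finite, Magma-supported calculation, and the point of the conjugators, as noted right after the definition of $\delta$, is precisely that $x_1$ pushes every $\delta(H)$ into $A_\Z$ and $x_2,x_3$ make the coordinates satisfy the stated congruences. One then reads off the coordinates $s_{i,j},t_i,u,v,w,x_i,y,z_i$ of $\delta(H)$ and verifies each listed congruence.

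For the reverse inclusion I would compare indices in $A_\Z$. Since $\delta(\B_\Z)\subseteq\Lambda$ and both are full-rank sublattices, $\delta(\B_\Z)=\Lambda$ follows from $[A_\Z:\delta(\B_\Z)]=[A_\Z:\Lambda]$. The index $[A_\Z:\Lambda]$ is the product of the moduli of the defining congruences, after combining the two conditions on each of $x_1,x_2,x_3$ into $x_i\equiv 0\pmod{12}$ and the two on $z_2$ into $z_2\equiv 0\pmod{12}$, and counting the coupled block on $w,z_1,z_2,z_3$ as a single constraint of index $4$ on the pair $(w,z_1)$ (the coordinates $s_{i,j}$ and $u$ being unconstrained). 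The index $[A_\Z:\delta(\B_\Z)]$ is the absolute value of the determinant of the $22\times 22$ integer matrix expressing $(\delta(H))_{H\in\mathcal{H}}$ in the standard $\Z$-basis of $A_\Z$ — a second Magma computation. Verifying that these two integers agree finishes the proof.

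The hard part is not conceptual but computational: the explicit determination of the $22$ images $\delta(H)$ and of the two indices, done with Magma exactly as in the rest of the paper. The only genuinely delicate bookkeeping concerns the $\gamma_{4,4}$-component, where $A_{4,4}=\Z[\oeta,\oxi]$ is not a matrix ring and the $(4,4)$-entry appears as $z_1+z_2\oeta+z_3\oxi$ subject to a non-diagonal congruence coupling it to $w$; the remaining checks are routine. One should also keep in mind \Cref{maxorder}: $A_\Z$ is only one convenient $\Z$-order containing an isomorphic copy of $\B_\Z$, so the precise shape of the congruences reflects the chosen $A_\Z$ and the chosen conjugators rather than anything canonical.
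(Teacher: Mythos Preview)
Your strategy is correct and would prove the theorem, but it differs from the route the paper actually takes. You propose to (i) check $\delta(H)\in\Lambda$ for each $H\in\mathcal{H}$ to obtain $\delta(\B_\Z)\subseteq\Lambda$, and then (ii) compare the indices $[A_\Z:\delta(\B_\Z)]=|\det M|$ and $[A_\Z:\Lambda]$ to force equality. The paper instead works in one stroke: having computed the integer matrix $M$ representing $\delta$, it observes that $\lambda\in\delta(\B_\Z)$ iff $M^{-1}\lambda\in\Z^{22}$, i.e.\ iff $24M^{-1}\lambda\in 24\,\Z^{22}$ (using that $24M^{-1}$ is integral), and then row-reduces $24M^{-1}$ over $\Z$ and discards trivial rows to read off precisely the displayed congruences. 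So the paper \emph{derives} the congruences from $M^{-1}$, whereas your argument \emph{verifies} a given list. Both rely on essentially the same Magma computation of $M$; the paper's route avoids the separate inclusion check and the somewhat delicate index count for $\Lambda$ (your handling of the coupled $w,z_1,z_2,z_3$ block would need to be made precise), at the cost of computing and simplifying $24M^{-1}$.
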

{ More symbolically written, we have 
	
	$\Lambda=\left[
	\vcenter{\xymatrix@C-1.4em@R-1.5em{
			\Z & \Z & \Z & 0 & 0 & (2) &\\
			\Z & \Z & \Z & 0 & 0 & (2) &\\
			\Z & \Z & \Z & 0 & 0 & (2) &\\
			0       & 0        &      0   & \Z & 0 &(2) &\\ 
			0       & 0        &      0   & 0 &   \Z &0& & *++[o][F]{8}\ar@{-}@/_1pc/[lll]_{\hspace{-0.5mm}-2}\ar@{-}@/_1pc/[dll]_{\hspace{-0.5mm}2}\ar@{-}@/_1pc/[dl]^{\hspace{-0.5mm}1}\\ 
			(12) & (12) & (12) &(2) & 0 & \Z &+(12)\oeta &+(4)\oxi &\\
	}}\right] . $}
\begin{proof} We identify $\Z^{22\ti 1}$ and $A_\Z$ along the isomorphism \[{\small\left(\setlength{\arraycolsep}{0.5pt}\begin{array}{c}
		s_{1,1},
		s_{2,1},
		s_{3,1},
		s_{1,2},
		s_{2,2},
		s_{3,2},
		s_{1,3},
		s_{2,3},
		s_{3,3},\\
		x_1,
		x_2,
		x_3,
		u,
		y,
		w,
		t_1,
		t_2,
		t_3,
		v,
		z_1,
		z_2,
		z_3
		\end{array}\right)^{\rm t}}\mapsto\left[\setlength{\arraycolsep}{2pt}\begin{array}{cccccc}
	s_{1,1} & s_{1,2}  & s_{1,3}  & 0 & 0 & t_1 \\
	s_{2,1} & s_{2,2}  & s_{2,3}  & 0 & 0 & t_2 \\
	s_{3,1} & s_{3,2}  & s_{3,3}  & 0 & 0 & t_3 \\
	0       & 0        &      0   & u& 0 & v\\ 
	0       & 0        &      0   & 0 &   w&0\\ 
	x_1& x_2 & x_3  & y  & 0 & z_1+ z_2\oeta+ z_3\oxi
	\end{array}\right].  \]

Let $M$ be the representation matrix of $\delta$, with respect to the bases $\tilde{\mathcal{H}}=(H_{0,0}, 	H_{0,1} ,       H_{1,0}, \linebreak     H^\Delta_1        ,    H_{0,4} ,       H_{4,0} ,   H^\Delta_4,		H_{1,1}      , H_{0,5} , H_{5,0} ,      H_7,H_{1,4}   ,H_{4,1}         ,   H_6       , H^\Delta_5  ,    H_{4,4}   ,    H_{5,1}      ,       H_{1,5}  ,      H_{5,4}  ,~  H_{4,5}    ,  H_8   , H_{5,5}) $ of $\B_\Z$ and the standard basis of $A_\Z\ab$. We obtain
	
	\[\hspace*{-1.5cm}M=\left({\renewcommand{\arraystretch}{1.05}\scriptsize\setlength{\arraycolsep}{1pt}\begin{array}{rrrrrrrrrrrrrrrrrrrrrrrrrrrrrrrrrrrrrrrrrrrr}
		0 &    0 &   15 &   -3 &    0 &   20 &    8 &    6  & \phantom{-}  0 &   25 &    7 &    9 &    8 &   -3  &   \phantom{--}1 &   12 &   10 &    3 &   15 &    4 &    3 &    5\\
		0 &    0 &  -18 &    0 &    0 &  -24 &    0 &   -9  &   0 &  -30 &  -12 &   -6 &  -12 &    0  &   0 &   -8 &  -15 &   -3 &  -10 &   -4 &   -4 &   -5\\
		0 &    0 &  126 &   -6 &    0 &  168 &   12 &   60  &   0 &  210 &   78 &   48 &   80 &   -6  &   0 &   64 &  100 &   21 &   80 &   28 &   26 &   35\\
		-5 &   -2 &  -60 &    9 &   -3 &  -55 &  -23 &  -24  &  -1 &  -85 &  -16 &  -36 &  -22 &   10  &   0 &  -33 &  -34 &  -12 &  -51 &  -11 &   -5 &  -17\\
		6 &    3 &   72 &    3 &    2 &   66 &    2 &   36  &   1 &  102 &   33 &   24 &   33 &    1  &   1 &   22 &   51 &   12 &   34 &   11 &   11 &   17\\
		-42 &  -20 & -504 &    2 &  -16 & -462 &  -46 & -240  &  -7 & -714 & -208 & -192 & -220 &   15  &   0 & -176 & -340 &  -84 & -272 &  -77 &  -65 & -119\\
		0 &    0 &  -10 &    2 &    0 &  -10 &   -4 &   -4  &   0 &  -15 &   -3 &   -6 &   -4 &    2  &   0 &   -6 &   -6 &   -2 &   -9 &   -2 &   -1 &   -3\\
		0 &    0 &   12 &    0 &    0 &   12 &    0 &    6  &   0 &   18 &    6 &    4 &    6 &    0  &   0 &    4 &    9 &    2 &    6 &    2 &    2 &    3\\
		0 &    0 &  -84 &    4 &    0 &  -84 &   -6 &  -40  &   0 & -126 &  -38 &  -32 &  -40 &    4  &   1 &  -32 &  -60 &  -14 &  -48 &  -14 &  -12 &  -21\\
		0 &    0 & -756 &   36 &    0 &-1008 &  -72 & -360  &   0 &-1260 & -468 & -288 & -480 &   72  &   0 & -384 & -600 & -108 & -480 & -144 & -120 & -180\\
		\phantom{-}252 &  \phantom{-}120 & 3024 &  -12 &   96 & 2772 &  276 & 1440  &  36 & 4284 & 1248 & 1152 & 1320 & -228  &   0 & 1056 & 2040 &  432 & 1632 &  396 &  252 &  612\\
		0 &    0 &  504 &  -24 &    0 &  504 &   36 &  240  &   0 &  756 &  228 &  192 &  240 &  -48  &   0 &  192 &  360 &   72 &  288 &   72 &   48 &  108\\
		0 &    0 &    0 &    1 &    0 &    0 &    0 &    0  &   0 &    0 &    1 &    0 &    0 &    1  &   1 &    0 &    0 &    0 &    0 &    0 &    1 &    0\\
		0 &    0 &    0 &    0 &    0 &    0 &    0 &    0  &   0 &    0 &    0 &    0 &    0 &   -2  &   0 &    0 &    0 &    0 &    0 &    0 &   -2 &    0\\
		0 &    0 &    0 &    0 &    0 &    0 &    2 &    0  &   0 &    0 &    0 &    0 &    0 &    0  &   1 &    0 &    0 &    0 &    0 &    0 &    0 &    0\\
		0 &    0 &  -10 &    2 &    0 &  -10 &   -4 &   -4  &   0 &  -10 &   -4 &   -6 &   -4 &    2  &   0 &   -6 &   -4 &   -2 &   -6 &   -2 &   -2 &   -2\\
		0 &    0 &   12 &    0 &    0 &   12 &    0 &    6  &   0 &   12 &    6 &    4 &    6 &    0  &   0 &    4 &    6 &    2 &    4 &    2 &    2 &    2\\
		0 &    0 &  -84 &    4 &    0 &  -84 &   -6 &  -40  &   0 &  -84 &  -40 &  -32 &  -40 &    4  &   0 &  -32 &  -40 &  -14 &  -32 &  -14 &  -14 &  -14\\
		0 &    0 &    0 &    0 &    0 &    0 &    0 &    0  &   0 &    0 &   -2 &    0 &    0 &    0  &   0 &    0 &    0 &    0 &    0 &    0 &   -2 &    0\\
		0 &    0 &    0 &    0 &    0 &    0 &    0 &    0  &   0 &    0 &    0 &    0 &    0 &    0  &   1 &    0 &    0 &    0 &    0 &    0 &    0 &    0\\
		0 &    0 &  504 &  -24 &    0 &  504 &   36 &  240  &   0 &  504 &  240 &  192 &  240 &  -48  &   0 &  192 &  240 &   72 &  192 &   72 &   24 &   72\\
		0 &    0 &    0 &    0 &    0 &    0 &    0 &    0  &   0 &    0 &    0 &    0 &    0 &    0  &   0 &    0 &    0 &    0 &    0 &    0 &    4 &    0\\
		\end{array}}\right)~.\]
	
	Let $\lambda:=\scriptsize\left[\begin{array}{cccccc}
	s_{1,1} & s_{1,2}  & s_{1,3}  & 0 & 0 & t_1 \\
	s_{2,1} & s_{2,2}  & s_{2,3}  & 0 & 0 & t_2 \\
	s_{3,1} & s_{3,2}  & s_{3,3}  & 0 & 0 & t_3 \\
	0       & 0        &      0   & u& 0 & v\\ 
	0       & 0        &      0   & 0 &   w&0\\ 
	x_1& x_2 & x_3  & y  & 0 & z_1+z_2\oeta+z_3\oxi
	\end{array}\right]\in A_\Z\ab$, identified with $\lambda\in \Z^{22\ti 1}$.

We have 
	\[\footnotesize
\begin{array}{rcl}
	\lambda\in\Lambda &\Leftrightarrow & \exists~q\in\Z^{22\ti 1} \text{ such that }  \lambda=Mq\\ &\Leftrightarrow&  \exists~q\in\Z^{22\ti 1} \text{ such that }  M^{-1}\cdot\lambda =q\\  &\Leftrightarrow& 24M^{-1}\cdot\lambda
	\in 24\Z^{22\times 1} \\
	&\Leftrightarrow& 
	{\setlength{\arraycolsep}{1mm}
		{\scriptsize\left( 
			\begin{array} {rrrrrrrrrrrrrrrrrrrrrrrrrrrrrrrrrrrrrrrrrrrrrr}
			
			0& 0&  0 & 0 & 0&  0 & 0&  0 & 0 & 2 & 0 & 0 & 0 & 0&  0&  0 & 0&  0 & 0&  0&  0 & 0\\
			0& 0&  0 & 0 & 0&  0 & 0&  0 & 0 & 0 & 2 & 0 & 0 & 0&  0&  0 & 0&  0 & 0&  0&  0 & 0\\
			0& 0&  0 & 0 & 0&  0 & 0&  0 & 0 & 0 & 0 & 2 & 0 & 0&  0&  0 & 0&  0 & 0&  0&  0 & 0\\
			0& 0&  0 & 0 & 0&  0 & 0&  0 & 0 & 0 & 0 & 0 & 0 &12&  0&  0 & 0&  0 & 0&  0&  0 & 0\\
			0& 0&  0 & 0 & 0&  0 & 0&  0 & 0 & 0 & 0 & 0 & 0 & 0&  6&  0 & 0&  0 & 0& 18&  1 & 0\\
			0& 0&  0 & 0 & 0&  0 & 0&  0 & 0 & 0 & 0 & 0 & 0 & 0&  0& 12 & 0&  0 & 0&  0&  0 & 0\\
			0& 0&  0 & 0 & 0&  0 & 0&  0 & 0 & 0 & 0 & 0 & 0 & 0&  0&  0 &12&  0 & 0&  0&  0 & 0\\
			0& 0&  0 & 0 & 0&  0 & 0&  0 & 0 & 0 & 0 & 0 & 0 & 0&  0&  0 & 0& 12 & 0&  0&  0 & 0\\
			0& 0&  0 & 0 & 0&  0 & 0&  0 & 0 & 0 & 0 & 0 & 0 & 0&  0&  0 & 0&  0 &12&  0&  0 & 0\\
			0& 0&  0 & 0 & 0&  0 & 0&  0 & 0 & 0 & 0 & 0 & 0 & 0&  0&  0 & 0&  0 & 0&  0&  2 & 0\\
			0& 0&  0 & 0 & 0&  0 & 0&  0 & 0 & 0 & 0 & 0 & 0 & 0&  0&  0 & 0&  0 & 0&  0&  0 & 6\\
			
			\end{array}\right)
	}}
	
	\renewcommand{\arraystretch}{0.6}
	{\left( \begin{array}{c}
		s_{1,1}\\
		s_{2,1}\\
		s_{3,1}\\
		\smash{\vdots}\rule[4.5mm]{0mm}{0mm}\\
		s_{3,3}\\
		x_1\\
		x_2\\
		x_3\\
		u\\
		y\\
		w\\
		t_1\\
		t_2\\
		t_3\\
		v\\
		z_1\\
		z_2\\
		z_3
		\end{array}\right)}
	\in 24\Z^{11\times 1}\\
	& \Leftrightarrow &{\renewcommand{\arraystretch}{0.9}\setlength{\arraycolsep}{2pt}\left\{\begin{array}{ccccccc}
		2w-2z_1&\equiv_8 &z_2 &\equiv_4& z_3 &\equiv_4& 0 \\
		x_1 &\equiv_4&0\\
		x_2 &\equiv_4&0\\
		x_3 &\equiv_4&0\\
		y &\equiv_2&0\\
		t_1 &\equiv_2&0\\
		t_2 &\equiv_2&0\\
		t_3 &\equiv_2&0\\
		v &\equiv_2&0\\\\
		x_1 &\equiv_3&0\\
		x_2 &\equiv_3&0\\
		x_3 &\equiv_3&0\\
		z_2 &\equiv_3&0\\
		\end{array}\right\} }~.
\end{array}\]
\end{proof}

\subsection{Localisation at 2: \texorpdfstring{$\B_{\Z_{(2)}}(\sd,\sd)$}{B\_Z\_(2)(S\textthreeinferior,S\textthreeinferior)} via congruences}
Write $R:=\Z_{(2)}$. In the $\Q$-algebra $A$, cf.\ \Cref{gamma}, we have the $R$-order \begin{center} $A_R := \left[\begin{array}{lllllll}
	{A_R}_{,1,1} && {A_R}_{,1,2} && {A_R}_{,1,3} && {A_R}_{,1,4} \\
	{A_R}_{,2,1} && {A_R}_{,2,2} && {A_R}_{,2,3} && {A_R}_{,2,4} \\
	{A_R}_{,3,1} && {A_R}_{,3,2} && {A_R}_{,3,3} && {A_R}_{,3,4} \\
	{A_R}_{,4,1} && {A_R}_{,4,2} && {A_R}_{,4,3} && {A_R}_{,4,4} \\
	\end{array}\right]:= \left[\begin{array}{lllllll}
	R^{3\ti 3} && 0 && 0 && R^{3\ti 1} \\
	\phantom{(}   0 && R && 0 && R \\
	\phantom{(}   0 && 0 && R && 0 \\
	\phantom{(}  R^{1\ti 3} && R && 0 && R[\overline{\eta},\overline{\xi}] 
	\end{array}\right]\subseteq A~.$ \end{center}

\begin{Corollary}\label{congrusd2}
	We have
	\[ \mbox{$\Lambda_{(2)}={\renewcommand{\arraystretch}{1.0}\setlength{\arraycolsep}{1pt}\left\{\begin{array}{rcl}
			\raisebox{8mm}{${\setlength{\arraycolsep}{2pt}\left[\begin{array}{cccccc}
					s_{1,1} & s_{1,2}  & s_{1,3}  & ~0~ & ~0~ & t_1 \\
					s_{2,1} & s_{2,2}  & s_{2,3}  & ~0~ & ~0~ & t_2 \\
					s_{3,1} & s_{3,2}  & s_{3,3}  & ~0~ & ~0~ & t_3 \\
					0       & 0        &      0   & ~u~& ~0~ & v\\ 
					0       & 0        &      0   &~ 0~ &   ~w~&0\\ 
					x_1& x_2 & x_3  & ~y~  & ~0~ & z_1+ z_2\oeta+ z_3\oxi
					\end{array}\right]}$}&\raisebox{8mm}{$\in A_R :$}&\begin{array}{ccccccc} 
			2w-2z_1&\equiv_8 &z_2 &\equiv_4& z_3 &\equiv_4& 0  \\
			x_1 &\equiv_4&0\\
			x_2 &\equiv_4&0\\
			x_3 &\equiv_4&0\\
			y &\equiv_2&0\\
			t_1 &\equiv_2&0\\
			t_2 &\equiv_2&0\\
			t_3 &\equiv_2&0\\
			v &\equiv_2&0	
			\end{array}\end{array}\right\} }$}\subseteq A_{R}\ab.\] In particular, we have $\B_{R}=\B_{R}(\sd,\sd)\cong\Lambda_{(2)}$ as $R$-algebras.
\end{Corollary}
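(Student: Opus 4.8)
The plan is to obtain \Cref{congrusd2} from \Cref{Proplamb} by flat base change along $\Z\to R:=\Z_{(2)}$, using that localising at $2$ makes the congruences modulo $3$ vacuous.

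\emph{Base change.} Since $\B_\Z(\sd,\sd)$ is $\Z$-free on the basis $\mathcal{H}$, there is a natural ring isomorphism $\B_{R}(\sd,\sd)\cong R\otimes_\Z\B_\Z(\sd,\sd)$. Each block of $A_\Z$ is $\Z$-free (with $\Z[\oeta,\oxi]$ free of rank $3$), so $R\otimes_\Z A_\Z\cong A_R$ as rings, compatibly with the block decomposition and fixing $\oeta,\oxi$. As $R$ is flat over $\Z$, applying $R\otimes_\Z(-)$ to the injective ring morphism $\delta\colon\B_\Z\to A_\Z$ of \Cref{Proplamb} (whose image is $\Lambda$) yields an injective ring morphism $\delta_R\colon\B_R\to A_R$ with image $R\otimes_\Z\Lambda\subseteq A_R$. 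So $\B_R(\sd,\sd)\cong R\otimes_\Z\Lambda$, and it remains to identify $R\otimes_\Z\Lambda$ with $\Lambda_{(2)}$ inside $A_R$.

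\emph{Congruences.} In the proof of \Cref{Proplamb}, after identifying $\B_\Z\cong\Z^{22\ti1}$ (via $\tilde{\mathcal{H}}$) and $A_\Z\cong\Z^{22\ti1}$ (standard basis), $\Lambda$ is exhibited as the kernel of the group homomorphism $\Z^{22\ti1}\to(\Z/24\Z)^{11\ti1}$, $v\mapsto Nv+24\Z^{11\ti1}$, for the explicit sparse integer matrix $N$ displayed there. Applying the exact functor $R\otimes_\Z(-)$ to the exact sequence $0\to\Lambda\to\Z^{22\ti1}\to(\Z/24\Z)^{11\ti1}$, and using $24R=8R$ (since $3\in\UU(R)$), we obtain $R\otimes_\Z\Lambda=\{v\in R^{22\ti1}:Nv\in8R^{11\ti1}\}$. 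Reading off the rows of $N$ exactly as in the proof of \Cref{Proplamb} but with $8R$ in place of $24\Z$: the rows that there forced $2x_i\in24\Z$ (equivalently $x_i\equiv_40$ and $x_i\equiv_30$) now force only $x_i\equiv_40$; the row that forced $2z_2\in24\Z$ now forces only $z_2\equiv_40$; and each remaining row already imposed a $2$-power congruence and is unchanged (in particular $6w+18z_1+z_2\in8R$ is equivalent to $2w-2z_1\equiv_8z_2$, and $6z_3\in8R$ to $z_3\equiv_40$). This is precisely the congruence system in the statement, so $R\otimes_\Z\Lambda=\Lambda_{(2)}$ and $\B_R(\sd,\sd)\cong\Lambda_{(2)}$ as $R$-algebras.

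The one step needing genuine attention is this bookkeeping---checking that passing from $24\Z$ to $8R$ removes exactly the $\equiv_3$ congruences and keeps the $\equiv_{2^k}$ ones. A more conceptual substitute: write $\Lambda=\Lambda^{(2)}\cap\Lambda^{(3)}$ inside $A_\Z$, where $\Lambda^{(2)}$ and $\Lambda^{(3)}$ are the full-rank subgroups defined by the $\equiv_{2^k}$, respectively $\equiv_3$, congruences; the indices $[A_\Z:\Lambda^{(2)}]$ and $[A_\Z:\Lambda^{(3)}]$ are coprime (a power of $2$ and a power of $3$), so $A_\Z/\Lambda\cong A_\Z/\Lambda^{(2)}\times A_\Z/\Lambda^{(3)}$; tensoring the finite group $A_\Z/\Lambda$ with $\Z_{(2)}$ kills the second factor, whence $R\otimes_\Z\Lambda=R\otimes_\Z\Lambda^{(2)}$, which is the subgroup of $A_R$ cut out by the $\equiv_{2^k}$ congruences, i.e.\ $\Lambda_{(2)}$.
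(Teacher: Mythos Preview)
Your argument is correct and is precisely the intended one: the paper states \Cref{congrusd2} as an immediate corollary of \Cref{Proplamb} without giving a separate proof, and your flat base change along $\Z\to\Z_{(2)}$, together with the observation that the $\equiv_3$ conditions become vacuous over $\Z_{(2)}$, is exactly how the result is meant to be read off. Your additional CRT-style remark (splitting $A_\Z/\Lambda$ into its $2$- and $3$-primary parts) is a clean way to avoid the row-by-row bookkeeping and is a nice supplement to the paper's implicit argument.
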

More symbolically written, we have 

\[\Lambda_{(2)}=\left[
\vcenter{\xymatrix@C-1.4em@R-1.5em{
		R & R & R & 0 & 0 & (2) &\\
		R & R & R & 0 & 0 & (2) &\\
		R & R & R & 0 & 0 & (2) &\\
		0       & 0        &      0   & R & 0 &(2) &\\ 
		0       & 0        &      0   & 0 &   R &0& & *++[o][F]{8}\ar@{-}@/_1pc/[lll]_{\hspace{-0.5mm}2}\ar@{-}@/_1pc/[dll]_{\hspace{-0.5mm}-2}\ar@{-}@/_1pc/[dl]^{\hspace{-0.5mm}1}\\ 
		(4) & (4) & (4) &(2) & 0 & R &~+(4)\oeta &~+(4)\oxi &\\
}}\right]. \]
\begin{Remark}
	We claim that $1_{\Lambda_{(2)}} =e_1 +e_2 +e_3 +e_4 +e_5$ is an orthogonal decomposition into primitive idempotents, where \[ \footnotesize\begin{array}{rcl c rcl c rcl}
	e_1&: =& {\setlength{\arraycolsep}{2pt}\left[\begin{array}{rrrrrrrrrrc}
		1 && 0 && 0  && 0 && 0 && 0 \\
		0 && 0 && 0&& 0 && 0 && 0 \\
		0 && 0 && 0  && 0 && 0 && 0 \\
		0       && 0        &&      0   && 0&& 0 && 0\\ 
		0       && 0        &&      0   && 0 &&   0&&0\\ 
		0&& 0 && 0  && 0  && 0 && 0
		\end{array}\right]} ~, &
	e_2&: =& {\setlength{\arraycolsep}{2pt}\left[\begin{array}{rrrrrrrrrrc}
		0 && 0 && 0  && 0 && 0 && 0 \\
		0 && 1 && 0&& 0 && 0 && 0 \\
		0 && 0 && 0  && 0 && 0 && 0 \\
		0       && 0        &&      0   && 0&& 0 && 0\\ 
		0       && 0        &&      0   && 0 &&   0&&0\\ 
		0&& 0 && 0  && 0  && 0 && 0
		\end{array}\right]}~, 
	&e_3&: =& {\setlength{\arraycolsep}{2pt}\left[\begin{array}{rrrrrrrrrrc}
		0 && 0 && 0  && 0 && 0 && 0 \\
		0 && 0&& 0&& 0 && 0 && 0 \\
		0 && 0 && 1  && 0 && 0 && 0 \\
		0       && 0        &&      0   && 0&& 0 && 0\\ 
		0       && 0        &&      0   && 0 &&   0&&0\\ 
		0&& 0 && 0  && 0  && 0 && 0
		\end{array}\right]}~, \\
	e_4&: =& {\setlength{\arraycolsep}{2pt}\left[\begin{array}{rrrrrrrrrrc}
		0 && 0 && 0  && 0 && 0 && 0 \\
		0 && 0 && 0&& 0 && 0 && 0 \\
		0 && 0 && 0  && 0 && 0 && 0 \\
		0       && 0        &&      0   && 1&& 0 && 0\\ 
		0       && 0        &&      0   && 0 &&   0&&0\\ 
		0&& 0 && 0  && 0  && 0 && 0
		\end{array}\right]}~, &
	e_5&: =& {\setlength{\arraycolsep}{2pt}\left[\begin{array}{rrrrrrrrrrc}
		0 && 0 && 0  && 0 && 0 && 0 \\
		0 && 0 && 0&& 0 && 0 && 0 \\
		0 && 0 && 0  && 0 && 0 && 0 \\
		0       && 0        &&      0   && 0&& 0 && 0\\ 
		0       && 0        &&      0   && 0 &&   1&&0\\ 
		0&& 0 && 0  && 0  && 0 && 1
		\end{array}\right]}~.\end{array}\]\end{Remark}
\begin{proof}
	We have 
	$\e_1\Lambda_{(2)} \e_1\cong R $,
	$\e_2\Lambda_{(2)} \e_2\cong R $,
	$\e_3\Lambda_{(2)} \e_3\cong R $ and
	$\e_4\Lambda_{(2)} \e_4\cong R $. So, it follows that $\e_1,\e_2,\e_3,\e_4$ are primitive. 
	
	As $R$-algebras, we have \[\begin{array}{rcl}\e_5\Lambda_{(2)} \e_5 &\cong&\left\{\left(\begin{array}{cc}
	w,   & z_1+z_2\oeta+z_3\oxi
	\end{array}\right)\in R\ti R[\oeta,\oxi]: 2w-2z_1\equiv_8 z_2 \equiv_4 z_3 \equiv_4 0\right\}=:\Gamma\\\\
	&\subseteq&  R\ti R[\oeta,\oxi]~ .\end{array}\]
	
	To show that $\e_5$ is primitive, we show that $\Gamma$ is local. 
	
	We have the $R$-linear basis $(b_1,b_2,b_3,b_4)$ of $ \Gamma$, where 
	\begin{center}
		$\begin{array}{rclccl}
		b_1&=& \left(\begin{array}{cc}
		1,
		& 1
		\end{array}\right), &
		b_2&=& \left(\begin{array}{cc}
		0,
		& 2+4\oeta
		\end{array}\right), \\\\b_3&=& \left(\begin{array}{cc}
		0,
		& 8\oeta
		\end{array}\right), &b_4&=& \left(\begin{array}{cc}
		0,
		& 4\oxi
		\end{array}\right)~. 
		\end{array}$ 
	\end{center}
	
	We {\it claim} that the Jacobson radical of $\Gamma$ is given by $ J:=~_R\langle 2b_1,b_2,b_3,b_4\rangle$, that $\Gamma/J\cong\F_2$ and that $\Gamma$ is local.
	
	In fact, the multiplication table for the basis elements is given by 
	
	\begin{center}
		$\begin{array}{c|c|c|c|c}
		(\cdot) & b_1 & b_2 & b_ 3 &b_4 \\\hline
		b_1 & b_1 & b_2 & b_3 & b_4 \\\hline
		b_2 & b_2 & 2b_2+b_3 & 2b_3 & 2 b_4\\\hline
		b_3 & b_3 & 2b_3 & 0 & 0\\\hline
		b_4 & b_4 & 2b_4 & 0 & 0\\
		\end{array}$~.
	\end{center}
	This shows that $J$ is an ideal. Moreover,  $J$ is topologically nilpotent as \begin{center}$J^3={_R\langle} 8b_1,4b_2,2b_3,4b_4\rangle\subseteq 2\e_5\Lambda_{(2)} \e_5 $ . \end{center} Since $\Gamma / J \cong \F_2$, the {\it claim} follows. \end{proof}

\subsection{\texorpdfstring{$\B_{\Z_{(2)}}(\sd,\sd)$}{B\_Z\_(2)(S\textthreeinferior,S\textthreeinferior)} and \texorpdfstring{$\B_{\F_{2}}(\sd,\sd)$}{B\_F\_2(S\textthreeinferior,S\textthreeinferior)} as path algebras modulo relations} Write $R :=\Z_{(2)}.$
We aim to write $\Lambda_{(2)}$, up to Morita equivalence, as path algebra modulo relations.  The $R$-algebra $\Lambda_{(2)}$ is Morita equivalent to $\Lambda'_{(2)} := (\e_3 +\e_4 +\e_5)\Lambda_{(2)}(\e_3 +\e_4 +\e_5)$ since \mbox{$\Lambda_{(2)}\e_1\cong \Lambda_{(2)}\e_2\cong\Lambda_{(2)}\e_3$} using multiplication with elements of $\Lambda_{(2)}$ with a single nonzero entry 1 in the upper $(3\ti 3)$-corner.

We have the $R$-linear basis of $\Lambda'_{(2)}$ consisting of \[\scriptsize
\begin{array}{rclcrclcrclcrcl}
\e_3&: =& {\setlength{\arraycolsep}{2pt}\left[\begin{array}{rrrrrrrrrrc}
	0 && 0 && 0  && 0 && 0 && 0 \\
	0 && 0&& 0&& 0 && 0 && 0 \\
	0 && 0 && 1  && 0 && 0 && 0 \\
	0       && 0        &&      0   && 0&& 0 && 0\\ 
	0       && 0        &&      0   && 0 &&   0&&0\\ 
	0&& 0 && 0  && 0  && 0 && 0
	\end{array}\right]}~, &
\e_4&: =& {\setlength{\arraycolsep}{2pt}\left[\begin{array}{rrrrrrrrrrc}
	0 && 0 && 0  && 0 && 0 && 0 \\
	0 && 0 && 0&& 0 && 0 && 0 \\
	0 && 0 && 0  && 0 && 0 && 0 \\
	0       && 0        &&      0   && 1&& 0 && 0\\ 
	0       && 0        &&      0   && 0 &&   0&&0\\ 
	0&& 0 && 0  && 0  && 0 &&0
	\end{array}\right]}~,  &
\e_5&: =& {\setlength{\arraycolsep}{2pt}\left[\begin{array}{rrrrrrrrrrc}
	0 && 0 && 0  && 0 && 0 && 0 \\
	0 && 0 && 0&& 0 && 0 && 0 \\
	0 && 0 && 0  && 0 && 0 && 0 \\
	0       && 0        &&      0   && 0&& 0 && 0\\ 
	0       && 0        &&      0   && 0 &&   1&&0\\ 
	0&& 0 && 0  && 0  && 0 && 1
	\end{array}\right]}~,\\\\
\tau_1&: =&  {\setlength{\arraycolsep}{2pt}\left[\begin{array}{rrrrrrrrrrc}
	0 && 0 && 0  && 0 && 0 && 0 \\
	0 && 0 && 0&& 0 && 0 && 0 \\
	0 && 0 && 0  && 0 && 0 && 0 \\
	0       && 0        &&      0   && 0&& 0 && 0\\ 
	0       && 0        &&      0   && 0 &&   0&&0\\ 
	0&& 0 && 4  && 0  && 0 && 0
	\end{array}\right]}~, &
\tau_2&: =&{\setlength{\arraycolsep}{2pt}\left[\begin{array}{rrrrrrrrrrc}
	0 && 0 && 0  && 0 && 0 && 0 \\
	0 && 0 && 0&& 0 && 0 && 0 \\
	0 && 0 && 0  && 0 && 0 && 2 \\
	0       && 0        &&      0   && 0&& 0 && 0\\ 
	0       && 0        &&      0   && 0 &&   0&&0\\ 
	0&& 0 && 0  && 0  && 0 && 0
	\end{array}\right]}~, &
\tau_3&: =&{\setlength{\arraycolsep}{2pt}\left[\begin{array}{rrrrrrrrrrc}
	0 && 0 && 0  && 0 && 0 && 0 \\
	0 && 0 && 0&& 0 && 0 && 0 \\
	0 && 0 && 0  && 0 && 0 && 0 \\
	0       && 0        &&      0   && 0&& 0 && 0\\ 
	0       && 0        &&      0   && 0 &&   0&&0\\ 
	0&& 0 && 0  && 2  && 0 &&0
	\end{array}\right]}~,\\\\
\tau_4&: =& {\setlength{\arraycolsep}{2pt}\left[\begin{array}{rrrrrrrrrrc}
	0 && 0 && 0  && 0 && 0 && 0 \\
	0 && 0 && 0&& 0 && 0 && 0 \\
	0 && 0 && 0  && 0 && 0 && 0 \\
	0       && 0        &&      0   && 0&& 0 && 2\\ 
	0       && 0        &&      0   && 0 &&   0&&0\\ 
	0&& 0 && 0  && 0  && 0 && 0
	\end{array}\right]}~, &
\tau_5&: =& {\setlength{\arraycolsep}{2pt}\left[\begin{array}{rrrrrrrrrrc}
	0 && 0 && 0  && 0 && 0 && 0 \\
	0 && 0 && 0&& 0 && 0 && 0 \\
	0 && 0 && 0  && 0 && 0 && 0 \\
	0       && 0        &&      0   && 0&& 0 && 0\\ 
	0       && 0        &&      0   && 0 &&   0&&0\\ 
	0&& 0 && 0  && 0  && 0 && 8\oeta
	\end{array}\right]}~,&
\tau_6&: =&{\setlength{\arraycolsep}{2pt}\left[\begin{array}{rrrrrrrrrrc}
	0 && 0 && 0  && 0 && 0 && 0 \\
	0 && 0 && 0&& 0 && 0 && 0 \\
	0 && 0 && 0  && 0 && 0 && 0 \\
	0       && 0        &&      0   && 0&& 0 && 0\\ 
	0       && 0        &&      0   && 0 &&   0&&0\\ 
	0&& 0 && 0  && 0  && 0 && 4\oxi
	\end{array}\right]}~, \\\\
\tau_7&: =& {\setlength{\arraycolsep}{2pt}\left[\begin{array}{rrrrrrrrrrc}
	0 && 0 && 0  && 0 && 0 && 0 \\
	0 && 0 && 0&& 0 && 0 && 0 \\
	0 && 0 && 0  && 0 && 0 && 0 \\
	0       && 0        &&      0   && 0&& 0 && 0\\ 
	0       && 0        &&      0   && 0 &&   0&&0\\ 
	0&& 0 && 0  && 0  && 0 && 2+4\oeta
	\end{array}\right]}
\end{array}\]

We have $\tau_5 = \tau_1\tau_2$ and $\tau_6 = \tau_3\tau_4+ 6\tau_1\tau_2$. Hence, as an $R$-algebra 
$\Lambda'_{(2)}$ is generated by $\e_3, \e_4, \e_5, \tau_1, \tau_2, \tau_3 ,\tau_4, \tau_7$.

Consider the quiver $\Psi:=\left[{\xymatrix@C-0.7em@R-0.7em{
		\tilde \e_3\ar@/^1pc/[rr]^-{\hspace{-0.7mm}\tilde \tau_2} & &\tilde \e_5\ar@(ur,dr)^-{\hspace{-0.7mm}\tilde\tau_7}\ar@/^1pc/[ll]^-{\hspace{-0.7mm}\tilde \tau_1}\ar@/_2pc/[rr]_-{\hspace{-0.7mm}\tilde \tau_3} & &  \tilde \e_4\ar@/_2pc/[ll]_-{\hspace{-0.7mm}\tilde\tau_4}
}}\right].$

We have a surjective $R$-algebra morphism $\phi: R\Psi\rightarrow \Lambda'_{(2)}$ by sending 
\[\begin{array}{rclcrclcrclcrclcrcl}
\tilde \e_3 &\mapsto& \e_3 &,&
\tilde \e_4 &\mapsto& \e_4 &,&
\tilde \e_5 &\mapsto& \e_5 &,&
\tilde \tau_1 &\mapsto& \tau_1 &,&\\
\tilde \tau_2 &\mapsto& \tau_2 &,&
\tilde \tau_3 &\mapsto& \tau_3 &,&
\tilde \tau_4 &\mapsto& \tau_4 &,&
\tilde \tau_7 &\mapsto& \tau_7 &.&
\end{array}\]

We establish the following multiplication trees, where we underline the elements that are not in an $R$-linear relation with previous elements.

\[{\xymatrix@C-1.9em@R-0.9em{
		\ul{\e_3}\ar[rr]_{\hspace{-0.5mm}\tau_2}& & \ul{\tau_2}\ar[ld]_{\hspace{-0.5mm}\tau_3}\ar[rd]_{\hspace{-0.5mm}\tau_7}\ar[rr]_{\hspace{-0.5mm}\tau_1} && \tau_2\tau_1=0\\ 
		&\tau_2\tau_3 = 0 & &\tau_2\tau_7= 2\tau_2 & \\
}}\hspace*{1cm}{\xymatrix@C-1.9em@R-0.9em{
		\ul{\e_4}\ar[rr]_{\hspace{-0.5mm}\tau_4}& & \ul{\tau_4}\ar[ld]_{\hspace{-0.5mm}\tau_3}\ar[rd]_{\hspace{-0.5mm}\tau_7}\ar[rr]_{\hspace{-0.5mm}\tau_1} && \tau_4\tau_1=0\\ 
		&\tau_4\tau_3 = 0 & &\tau_4\tau_7= 2\tau_4 & \\
}}\]

\[{\xymatrix@C-1.5em@R-0.9em{
		& & & \tau_7^2 = 2\tau_7+\tau_1\tau_2  & & & \\
		& &\tau_7\tau_1= 2\tau_1 & \ul{\tau_7}\ar[u]_-{\hspace{-0.5mm}\tau_7}\ar[r]_-{\hspace{-0.5mm}\tau_3}\ar[l]_-{\hspace{-0.5mm}\tau_1}  &\tau_7\tau_3= 2\tau_3 & & \\
		& & &    \ul{\e_5}\ar[u]_-{\hspace{-0.5mm}\tau_7}\ar[dr]_-{\hspace{-0.5mm}\tau_1}\ar[dl]_-{\hspace{-0.5mm}\tau_3} & & & \\
		& & \ul{\tau_3}\ar[d]_-{\hspace{-0.5mm}\tau_4}&   &\ul{\tau_1}\ar[r]_-{\hspace{-0.5mm}\tau_2} &\ul{\tau_1\tau_2}\ar[r]_-{\hspace{-0.5mm}\tau_3}\ar[d]_-{\hspace{-0.5mm}\tau_7}\ar[rd]_-{\hspace{-0.5mm}\tau_1} &\tau_1\tau_2\tau_3=0 \\
		& &\ul{\tau_3\tau_4}\ar[r]_-{\hspace{-0.5mm}\tau_3}\ar[dd]_-{\hspace{-0.5mm}\tau_7}\ar[dr]_-{\hspace{-0.5mm}\tau_1}  &\tau_3\tau_4\tau_3=0   & &\tau_1\tau_2\tau_7=2\tau_1\tau_2 & \tau_1\tau_2\tau_1=0\\
		& &  &\tau_3\tau_4\tau_1=0     & & & \\
		& &\tau_3\tau_4\tau_7=2\tau_3\tau_4  &   & & & \\
}}\]
So, the kernel of $\varphi$ contains the elements:
\begin{center}
	$\begin{array}{ccccccccccc}
	\ttau_2\ttau_1   &,&\ttau_4\ttau_1 &,& \ttau_1\ttau_2\ttau_1&,&\ttau_3\ttau_4\ttau_1 &,& \ttau_7\ttau_1-2\ttau_1,  \\
	\ttau_2\ttau_3&,&\ttau_4\ttau_3  &,& \ttau_1\ttau_2\ttau_3&,&\ttau_3\ttau_4\ttau_3&,& \ttau_7\ttau_3-2\ttau_3, \\
	\ttau_2\ttau_7-2\ttau_2&,& \ttau_4\ttau_7-2\ttau_4&,&\ttau_1\ttau_2\ttau_7-2\ttau_1\ttau_2 &,&  \ttau_3\ttau_4\ttau_7-2\ttau_3\ttau_4 &,&\ttau_7^2-2\ttau_7-\ttau_1\ttau_2~. \\
	\end{array}$ 
\end{center}
Let $I$ be the ideal generated by these elements. So $I\subseteq\kernel(\phi).$  Therefore, $\phi$ induces a surjective $R$-algebra morphism from $R\Psi/I$ to $\Lambda_{(2)}'\ab$. We may reduce the list of generators to obtain \[I= (
\ttau_2\ttau_1, \ttau_4\ttau_1,\ttau_7\ttau_1-2\ttau_1,\ttau_2\ttau_3, \ttau_4\ttau_3  , \ttau_7\ttau_3-2\ttau_3,
\ttau_2\ttau_7-2\ttau_2, \ttau_4\ttau_7-2\ttau_4,  \ttau_7^2-2\ttau_7-\ttau_1\ttau_2)~.\]

Note that $R\Psi / I$ is $R$-linearly generated by \begin{center} $\mathcal{N}:=\{\tilde \e_3+I, \tilde \e_4+I,\tilde \e_5+I, \ttau_1+I , \ttau_2+I, \ttau_3+I,\ttau_4+I, \ttau_7+I, \ttau_3\ttau_4+I, \ttau_1\ttau_2+I\}$,\end{center}cf.\ the underlined elements above. To see that, note that a product $\xi$ of $k$ generators may be written as a product in $\mathcal{N}$ of $k'$ generators and a product of $k''$ generators, where $k=k'+k''$ and where $k'$ is choosen maximal. We call $k''$ the excess of $\xi$. If $k''\geq 1$ then, using the trees above, we may write $\xi$ as an $R$-linear combination of products of generators that have excess $\leq k''-1$.  Moreover, note that $|\mathcal{N}|=10=\rk_R(\Lambda'_{(2)})$. 

Since we have a surjective $R$-algebra morphism from $R\Psi /I$ to $\Lambda'_{(2)}\ab$, this rank argument shows this morphism to be bijective. In particular, $I=\kernel(\phi)$.

So, we obtain the
\begin{Proposition}\label{pfadsdZ2} Recall that $I=\left(\begin{array}{ccccccccccc}
	\ttau_2\ttau_1   &,& \ttau_2\ttau_3  &,& \ttau_2\ttau_7-2\ttau_2 , \\
	\ttau_4\ttau_1&,& \ttau_4\ttau_3  &,& \ttau_4\ttau_7-2\ttau_4, \\
	\ttau_7\ttau_1-2\ttau_1&,& \ttau_7\ttau_3-2\ttau_3&,&  \ttau_7^2-2\ttau_7-\ttau_1\ttau_2~ \\
	\end{array}\right) $.
	
	We have the isomorphism of $\Z_{(2)}$-algebras
	\begin{center} $\begin{array}{rcl}\Lambda'_{(2)}&\xrightarrow{\sim}& R\left[{\xymatrix@C-0.5em@R-0.5em{
				\tilde \e_3\ar@/^1pc/[rr]^-{\hspace{-0.7mm}\tilde \tau_2} & &\tilde \e_5\ar@(ur,dr)^-{\hspace{-0.7mm}\tilde\tau_7}\ar@/^1pc/[ll]^-{\hspace{-0.7mm}\tilde \tau_1}\ar@/_2pc/[rr]_-{\hspace{-0.7mm}\tilde \tau_3} & &  \tilde \e_4\ar@/_2pc/[ll]_-{\hspace{-0.7mm}\tilde\tau_4}}}\right] /I\\\\
		\e_i&\mapsto& \tilde \e_i+I \text{ for } i\in [3,5]\\
		\tau_j &\mapsto&\ttau_j+I \text{ for } j\in [1,7]\setminus\{5,6\}~.\end{array}$ \end{center} Recall that $\B_{\Z_{(2)}}(\sd,\sd)$ is Morita equivalent to $\Lambda'_{(2)}\ab$.
\end{Proposition}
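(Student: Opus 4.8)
The plan is to follow the usual recipe for presenting a basic order as a path algebra modulo relations: produce a surjection from the path algebra, bound the rank of its quotient by $I$ through a rewriting argument, and then match ranks. First I would fix the explicit $R$-linear basis $(\e_3,\e_4,\e_5,\tau_1,\dots,\tau_7)$ of $\Lambda'_{(2)}$ exhibited above (obtained from \Cref{congrusd2}) and record the full $10\ti 10$ multiplication table of these elements; this is a finite computation inside the $R$-order $A_R$. From the table one reads off $\tau_5=\tau_1\tau_2$ and $\tau_6=\tau_3\tau_4+6\tau_1\tau_2$, whence $\Lambda'_{(2)}$ is generated as an $R$-algebra by $\e_3,\e_4,\e_5,\tau_1,\tau_2,\tau_3,\tau_4,\tau_7$, and sending the vertices and arrows of $\Psi$ to these elements yields the surjective $R$-algebra morphism $\phi\colon R\Psi\to\Lambda'_{(2)}$ of the statement.

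Second, using the same multiplication table I would check that each of the nine displayed generators of $I$ lies in $\kernel(\phi)$, so that $\phi$ factors through a surjective $R$-algebra morphism $\overline{\phi}\colon R\Psi/I\to\Lambda'_{(2)}$. Along the way one verifies that the longer, redundant list of relations coming from the multiplication trees drawn above can indeed be cut down to these nine --- a short Gaussian-style elimination among the relators.

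Third, the crux: showing that $R\Psi/I$ is $R$-linearly spanned by the ten residue classes in $\mathcal{N}$. For this I would run the \emph{excess} argument indicated in the text. Every element of $R\Psi$ is an $R$-combination of words in the generating arrows; given such a word, collect a maximal prefix lying in $\mathcal{N}$ and call the length $k''$ of the remaining suffix its excess. Reading off the multiplication trees, any word of excess $k''\ge 1$ can be rewritten modulo $I$ as an $R$-combination of words of excess $\le k''-1$: the trees record that $\tau_2\tau_1,\tau_2\tau_3,\tau_4\tau_1,\tau_4\tau_3,\tau_1\tau_2\tau_1,\tau_1\tau_2\tau_3,\tau_3\tau_4\tau_1,\tau_3\tau_4\tau_3$ all vanish modulo $I$, that right multiplication by $\tau_7$ merely rescales the elements of $\mathcal{N}$, and that $\tau_7^2$ reduces into $\langle\mathcal{N}\rangle_R$; so no new basis element ever appears. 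Induction on the excess then gives $R\Psi/I=\langle\mathcal{N}\rangle_R$, hence $\rk_R(R\Psi/I)\le 10$.

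Finally, a rank count closes the argument. The module $\Lambda'_{(2)}$ is $R$-free of rank $10$, so composing a surjection $R^{10}\twoheadrightarrow R\Psi/I$ (which exists since $|\mathcal{N}|=10$) with $\overline{\phi}$ gives a surjective endomorphism of $R^{10}$, necessarily an isomorphism; therefore $\overline{\phi}$ is an isomorphism and $I=\kernel(\phi)$. The Morita equivalence of $\Lambda'_{(2)}$ with $\B_{\Z_{(2)}}(\sd,\sd)$ was established before the statement, so this finishes it. The only real obstacle is bookkeeping: organizing the multiplication-tree reductions so that \emph{every} word in the eight generators is provably brought into $\langle\mathcal{N}\rangle_R$ modulo $I$, and making sure that nothing is lost when the relator list is trimmed to the nine generators of $I$; conceptually nothing new is needed once the multiplication table is in hand.
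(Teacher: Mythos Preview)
Your proposal is correct and follows essentially the same route as the paper: the argument preceding the proposition already constructs the surjection $\phi$, verifies the nine relators via the multiplication trees, uses the excess argument to show that $R\Psi/I$ is $R$-linearly generated by the ten-element set $\mathcal{N}$, and then invokes the rank comparison $|\mathcal{N}|=10=\rk_R(\Lambda'_{(2)})$ to conclude bijectivity. Your explicit remark that a surjective $R$-linear endomorphism of $R^{10}$ is an isomorphism makes the final step slightly more transparent than the paper's one-line ``rank argument'', but the overall strategy is identical.
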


\begin{Corollary}\label{pfadsdF2}
	As $\F_2$-algebras, we have \[\Lambda'_{(2)}/2\Lambda'_{(2)}~\cong~\F_2\left[{\xymatrix@C-0.5em@R-0.5em{
			\tilde \e_3\ar@/^1pc/[rr]^-{\hspace{-0.7mm}\tilde \tau_2} & &\tilde \e_5\ar@(ur,dr)^-{\hspace{-0.7mm}\tilde\tau_7}\ar@/^1pc/[ll]^-{\hspace{-0.7mm}\tilde \tau_1}\ar@/_2pc/[rr]_-{\hspace{-0.7mm}\tilde \tau_3} & &  \tilde \e_4\ar@/_2pc/[ll]_-{\hspace{-0.7mm}\tilde\tau_4}}}\right] /\left(\begin{array}{ccccccccccc}
	\ttau_2\ttau_1   &,& \ttau_2\ttau_3  &,& \ttau_2\ttau_7,  \\
	\ttau_4\ttau_1&,& \ttau_4\ttau_3  &,& \ttau_4\ttau_7, \\
	\ttau_7\ttau_1&,& \ttau_7\ttau_3&,&  \ttau_7^2-\ttau_1\ttau_2~ \\
	\end{array}\right) ~ .\] Recall that $\B_{\F_{2}}(\sd,\sd)$ is Morita equivalent to $\Lambda'_{(2)}/2\Lambda'_{(2)}\ab$.
\end{Corollary}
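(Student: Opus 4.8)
The plan is to obtain \Cref{pfadsdF2} as a formal reduction modulo $2$ of \Cref{pfadsdZ2}. Write $R:=\Z_{(2)}$ and let $\phi:R\Psi\to\Lambda'_{(2)}$ be the surjection of that proposition, with $\kernel(\phi)=I$. Since $R/2R\cong\F_2$, we have $R\Psi/2R\Psi\cong\F_2\Psi$, and I would write $\pi:R\Psi\to\F_2\Psi$ for the reduction map, $\tilde\e_i\mapsto\tilde\e_i$, $\ttau_j\mapsto\ttau_j$. Applying right-exactness of $\F_2\otimes_R(-)$ to $0\to I\to R\Psi\xrightarrow{\phi}\Lambda'_{(2)}\to 0$ (equivalently, the third isomorphism theorem: $2\Lambda'_{(2)}$ corresponds under $\phi$ to $(2R\Psi+I)/I$, so $\Lambda'_{(2)}/2\Lambda'_{(2)}\cong R\Psi/(2R\Psi+I)\cong\F_2\Psi/\pi(I)$) yields an isomorphism of $\F_2$-algebras $\Lambda'_{(2)}/2\Lambda'_{(2)}\cong\F_2\Psi/\pi(I)$.

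It then remains to identify $\pi(I)$. As $\pi$ is a surjective ring homomorphism, $\pi(I)$ is the two-sided ideal of $\F_2\Psi$ generated by the images of the nine generators of $I$ recorded in \Cref{pfadsdZ2}. In $\ttau_7\ttau_1-2\ttau_1$, $\ttau_7\ttau_3-2\ttau_3$, $\ttau_2\ttau_7-2\ttau_2$, $\ttau_4\ttau_7-2\ttau_4$ and $\ttau_7^2-2\ttau_7-\ttau_1\ttau_2$ the terms divisible by $2$ vanish under $\pi$, so these map respectively to $\ttau_7\ttau_1$, $\ttau_7\ttau_3$, $\ttau_2\ttau_7$, $\ttau_4\ttau_7$, $\ttau_7^2-\ttau_1\ttau_2$, while $\ttau_2\ttau_1$, $\ttau_4\ttau_1$, $\ttau_2\ttau_3$, $\ttau_4\ttau_3$ are unchanged. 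These nine elements are exactly the relations displayed in the statement, so $\pi(I)$ is the asserted ideal and the chain of isomorphisms gives the claim. As an optional consistency check one can rerun the "excess'' argument of \Cref{pfadsdZ2} over $\F_2$: the right-hand side is $\F_2$-spanned by the ten residues of $\mathcal{N}$, which matches $\rk_R(\Lambda'_{(2)})=\dim_{\F_2}(\Lambda'_{(2)}/2\Lambda'_{(2)})=10$; but this is not needed once the above argument is in place.

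For the concluding Morita assertion, recall $\B_{\F_2}(\sd,\sd)=\F_2\otimes_{\Z_{(2)}}\B_{\Z_{(2)}}(\sd,\sd)$, and that the idempotent $\e_3+\e_4+\e_5$ realising $\Lambda'_{(2)}$ as the corner $(\e_3+\e_4+\e_5)\Lambda_{(2)}(\e_3+\e_4+\e_5)$ is full in $\Lambda_{(2)}$; fullness is preserved under reduction modulo $2$, so $\Lambda_{(2)}/2\Lambda_{(2)}$ and $\Lambda'_{(2)}/2\Lambda'_{(2)}$ stay Morita equivalent, and together with $\B_{\Z_{(2)}}(\sd,\sd)\cong\Lambda_{(2)}$ (\Cref{congrusd2}) this gives the stated Morita equivalence. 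I do not expect a genuine obstacle here: the corollary is a mod-$2$ reduction of \Cref{pfadsdZ2}, the only point worth a line being that $\pi$ being a surjective ring map transports generators of $I$ to generators of $\pi(I)$.
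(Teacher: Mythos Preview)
Your argument is correct and is precisely the reduction modulo $2$ that the paper intends by labelling this a corollary of \Cref{pfadsdZ2} without further proof. The only cosmetic remark is that for the Morita statement you need not invoke ``fullness'' abstractly: the explicit matrix-unit isomorphisms $\Lambda_{(2)}\e_1\cong\Lambda_{(2)}\e_2\cong\Lambda_{(2)}\e_3$ used before \Cref{pfadsdZ2} have integer entries and hence survive reduction mod $2$ directly.
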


\subsection{Localisation at 3: \texorpdfstring{$\B_{\Z_{(3)}}(\sd,\sd)$}{B\_Z\_(3)(S\textthreeinferior,S\textthreeinferior)} via congruences}
Write $R=\Z_{(3)}$. In the $\Q$-algebra $A$, cf.\ \Cref{gamma}, we have the $R$-order \begin{center} $A_R := \left[\begin{array}{lllllll}
	{A_R}_{,1,1} && {A_R}_{,1,2} && {A_R}_{,1,3} && {A_R}_{,1,4} \\
	{A_R}_{,2,1} && {A_R}_{,2,2} && {A_R}_{,2,3} && {A_R}_{,2,4} \\
	{A_R}_{,3,1} && {A_R}_{,3,2} && {A_R}_{,3,3} && {A_R}_{,3,4} \\
	{A_R}_{,4,1} && {A_R}_{,4,2} && {A_R}_{,4,3} && {A_R}_{,4,4} \\
	\end{array}\right]:= \left[\begin{array}{lllllll}
	R^{3\ti 3} && 0 && 0 && R^{3\ti 1} \\
	\phantom{(}   0 && R && 0 && R \\
	\phantom{(}   0 && 0 && R && 0 \\
	\phantom{(}  R^{1\ti 3} && R && 0 && R[\overline{\eta},\overline{\xi}] 
	\end{array}\right]\subseteq A~.$ \end{center}
\begin{Corollary}\label{congruencessd3}
	We have
	\[\Lambda_{(3)}=
	{\setlength{\arraycolsep}{1pt}\left\{\begin{array}{rcccccl}
		{\setlength{\arraycolsep}{1pt}\left[\begin{array}{cccccc}
			s_{1,1} & s_{1,2}  & s_{1,3}  & ~0~ & ~0~ & t_1 \\
			s_{2,1} & s_{2,2}  & s_{2,3}  & ~0~ & ~0~ & t_2 \\
			s_{3,1} & s_{3,2}  & s_{3,3}  & ~0~ & ~0~ & t_3 \\
			0       & 0        &      0   & ~u~& ~0~ & v\\ 
			0       & 0        &      0   & ~0~ &   ~w~&0\\ 
			x_1& x_2 & x_3  & ~y~  & 0 & z_1+z_2\oeta+ z_3\oxi
			\end{array}\right]}\in A_R:
		\begin{array}{ccc}x_1 &\equiv_3&0\\
		x_2 &\equiv_3&0\\
		x_3 &\equiv_3&0\\
		z_2 &\equiv_3&0\end{array}
		\end{array}\right\} }\subseteq A_R\ab.\]
	
	In particular, we have $\B_{R}=\B_{R}(\sd,\sd)\cong\Lambda_{(3)}$ as $R$-algebras.
	
\end{Corollary}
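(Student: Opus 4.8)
The plan is to deduce \Cref{congruencessd3} from \Cref{Proplamb} by localising at the prime $3$, re-using the representation matrix $M$ and the integral row reduction already carried out there.

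First I would record the base-change set-up. Since $\B_R(\sd,\sd)$ is $R$-free on the biset basis $\mathcal H$, we may regard $\B_R\subseteq\B_\Q$ as the $R$-span of $\B_\Z$; likewise the order $A_R$ above is the $R$-span of $A_\Z$ inside the $\Q$-algebra $A$, and the multiplications $\al_{i,j,k}$ restrict to $A_R$ exactly as they do to $A_\Z$ (as $R\supseteq\Z$). The morphism $\delta\colon\B_\Z\to A$ of \Cref{Proplamb} is defined over $\Q$ by conjugation with $x_1,x_2,x_3\in\UU(A)$, hence restricts to a ring morphism $\delta_R:=\delta|_{\B_R}\colon\B_R\to A_R$; since $\delta$ is injective and $R$ is flat over $\Z$, the map $\delta_R$ is again injective, with image the $R$-span $\Lambda_{(3)}:=R\cdot\delta(\B_\Z)=R\cdot\Lambda\subseteq A_R$. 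This already yields $\B_R\cong\Lambda_{(3)}$ as rings, so the remaining task is to identify $\Lambda_{(3)}$ with the congruence set displayed in the statement.

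For that I would rerun the computation from the proof of \Cref{Proplamb} with $\Z$ replaced by $R=\Z_{(3)}$. For $\lambda\in A_R$, identified with a vector in $R^{22\times 1}$ as there, one has $\lambda\in\Lambda_{(3)}$ iff $\lambda=Mq$ for some $q\in R^{22\times 1}$, iff $24M^{-1}\lambda\in 24R^{22\times 1}$, and the same row operations (over $\Z$, hence valid over $R$) together with a column reorder turn this into $N\cdot\lambda^{\mathrm{reord}}\in 24R^{11\times 1}$ with the very $11\times 22$ matrix $N$ appearing there. Now I would read off the eleven rows over $R$, using that $8$ and $4$ are units in $\Z_{(3)}$ — so $24R=3R$, $12R\subseteq 3R$, and the conditions $12y\in 24R$, $6z_3\in 24R$ become $y\in R$, $z_3\in R$, i.e.\ vacuous — and that $2$ is a unit modulo $3$: the three rows with coefficient $2$ on $x_1,x_2,x_3$ become $x_1\equiv_3 x_2\equiv_3 x_3\equiv_3 0$; the row $2z_2\in 24R$ becomes $z_2\equiv_3 0$ (and the mixed row $6w+18z_1+z_2\in 24R$, in which $6w$ and $18z_1$ already lie in $3R$, collapses to the same condition); while the rows $12t_1,12t_2,12t_3,12v\in 24R$ are vacuous as above. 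This leaves precisely the four congruences $x_1\equiv_3 x_2\equiv_3 x_3\equiv_3 z_2\equiv_3 0$ defining $\Lambda_{(3)}$ in the statement.

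The only point needing care — the \emph{main obstacle}, such as it is — is checking that the $2$-primary and $3$-primary parts of the congruence system of \Cref{Proplamb} genuinely decouple under localisation at $3$, i.e.\ that no surviving mod-$3$ condition is concealed inside a mod-$2^k$ condition of the original system, and conversely. Since the reduced matrix $N$ is already essentially in Hermite form — each row a single clean condition, with the lone mixed $w$–$z_1$–$z_2$ row collapsing modulo $3$ to a condition already present — this separation is transparent and the remaining verification is pure bookkeeping. If one prefers a coordinate-free argument, one may instead write $\Lambda=\Lambda^{(2)}\cap\Lambda^{(3)}$ inside $A_\Z$, where $\Lambda^{(2)}$ (resp.\ $\Lambda^{(3)}$) imposes only the mod-$2^k$ (resp.\ mod-$3$) congruences; then $[A_\Z:\Lambda^{(2)}]$ is a power of $2$ and $[A_\Z:\Lambda^{(3)}]$ a power of $3$, so $R\otimes_\Z\Lambda^{(2)}=A_R$ while $R\otimes_\Z\Lambda^{(3)}$ is the stated congruence set, whence $\Lambda_{(3)}=R\otimes_\Z\Lambda=A_R\cap(R\otimes_\Z\Lambda^{(3)})$ equals that set.
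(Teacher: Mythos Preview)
Your proposal is correct and follows exactly the route the paper intends: the paper states \Cref{congruencessd3} as a bare corollary of \Cref{Proplamb} with no proof, and the natural way to obtain it is precisely what you do --- tensor the integral description $\Lambda\subseteq A_\Z$ with $R=\Z_{(3)}$ and observe that all $2$-power congruences become vacuous while the four mod-$3$ conditions on $x_1,x_2,x_3,z_2$ survive. Your row-by-row reading of the reduced $11\times 22$ matrix over $R$ is accurate, including the collapse of the mixed $w$--$z_1$--$z_2$ row to $z_2\equiv_3 0$.
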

More symbolically written, we have 

$\Lambda_{(3)}=\left[
\vcenter{\xymatrix@C-1.1em@R-1.22em{
		R & R & R & 0 & 0 & R \\
		R & R & R & 0 & 0 & R \\
		R & R & R & 0 & 0 & R \\
		0       & 0        &      0   & R & 0 &R \\ 
		0       & 0        &      0   & 0 &   R &0& \\ 
		(3) & (3) & (3) &R & 0 & R &+(3)\oeta &+R\oxi \\
}}\right]. $

\begin{Remark} We claim that $1_{\Lambda_{(3)}} =\e_1 +\e_2 +\e_3 +\e_4 +\e_5 +\e_6$ is an orthogonal decomposition into primitive idempotents, where \[ \footnotesize\begin{array}{rcl c rcl  crcl}
	\e_1&: =& {\setlength{\arraycolsep}{2pt}\left[\begin{array}{rrrrrrrrrrc}
		1 && 0 && 0  && 0 && 0 && 0 \\
		0 && 0 && 0&& 0 && 0 && 0 \\
		0 && 0 && 0  && 0 && 0 && 0 \\
		0       && 0        &&      0   && 0&& 0 && 0\\ 
		0       && 0        &&      0   && 0 &&   0&&0\\ 
		0&& 0 && 0  && 0  && 0 && 0
		\end{array}\right]} ~, &
	\e_2&: =& {\setlength{\arraycolsep}{2pt}\left[\begin{array}{rrrrrrrrrrc}
		0 && 0 && 0  && 0 && 0 && 0 \\
		0 && 1 && 0&& 0 && 0 && 0 \\
		0 && 0 && 0  && 0 && 0 && 0 \\
		0       && 0        &&      0   && 0&& 0 && 0\\ 
		0       && 0        &&      0   && 0 &&   0&&0\\ 
		0&& 0 && 0  && 0  && 0 && 0
		\end{array}\right]}~, & 
	\e_3&: =& {\setlength{\arraycolsep}{2pt}\left[\begin{array}{rrrrrrrrrrc}
		0 && 0 && 0  && 0 && 0 && 0 \\
		0 && 0&& 0&& 0 && 0 && 0 \\
		0 && 0 && 1  && 0 && 0 && 0 \\
		0       && 0        &&      0   && 0&& 0 && 0\\ 
		0       && 0        &&      0   && 0 &&   0&&0\\ 
		0&& 0 && 0  && 0  && 0 && 0
		\end{array}\right]}~, &\\\\
	\e_4&: =& {\setlength{\arraycolsep}{2pt}\left[\begin{array}{rrrrrrrrrrc}
		0 && 0 && 0  && 0 && 0 && 0 \\
		0 && 0 && 0&& 0 && 0 && 0 \\
		0 && 0 && 0  && 0 && 0 && 0 \\
		0       && 0        &&      0   && 1&& 0 && 0\\ 
		0       && 0        &&      0   && 0 &&   0&&0\\ 
		0&& 0 && 0  && 0  && 0 && 0
		\end{array}\right]}~, &
	\e_5&: =& {\setlength{\arraycolsep}{2pt}\left[\begin{array}{rrrrrrrrrrc}
		0 && 0 && 0  && 0 && 0 && 0 \\
		0 && 0 && 0&& 0 && 0 && 0 \\
		0 && 0 && 0  && 0 && 0 && 0 \\
		0       && 0        &&      0   && 0&& 0 && 0\\ 
		0       && 0        &&      0   && 0 &&   1&&0\\ 
		0&& 0 && 0  && 0  && 0 && 0
		\end{array}\right]}~, &
	\e_6&: =& {\setlength{\arraycolsep}{2pt}\left[\begin{array}{rrrrrrrrrrc}
		0 && 0 && 0  && 0 && 0 && 0 \\
		0 && 0 && 0&& 0 && 0 && 0 \\
		0 && 0 && 0  && 0 && 0 && 0 \\
		0       && 0        &&      0   && 0&& 0 && 0\\ 
		0       && 0        &&      0   && 0 &&   0&&0\\ 
		0&& 0 && 0  && 0  && 0 && 1
		\end{array}\right]}~.\end{array}\]
\end{Remark}
\begin{proof}
	We have $\e_s\Lambda_{(3)} \e_s\cong R $ for $s\in[1,5]$. Therefore it follows that $\e_1,\e_2,\e_3,\e_4, \e_5$ are primitive.  
	
	To show that that $\e_6$ is primitive, we {\it claim} that the ring $\e_6\Lambda_{(3)}\e_6\cong R[\overline{\eta},\overline{\xi}]$ is local.
	
	We have $\UU( R[\overline{\eta},\overline{\xi}])= R[\overline{\eta},\overline{\xi}]\setminus (3,\overline{\eta},\overline{\xi})$ . In fact, for $u:= a + b\overline{\eta} +c\overline{\xi}$ with $a\in R\setminus(3)$ and $b,c\in R$, the inverse is given by $u^{-1}= a^{-1} - a^{-2}b\overline{\eta} -a^{-2}c\overline{\xi}$ as \begin{center} $uu^{-1}=aa^{-1} +(-a^{-1}b + a^{-1}b)\overline{\eta}+(-a^{-1}c + a^{-1}c)\overline{\xi}=1$ .\end{center}
	
	Thus the nonunits of $R[\overline{\eta},\overline{\xi}]$ form an ideal and so $R[\overline{\eta},\overline{\xi}]$ is a local ring. This proves the {\it claim}. 
\end{proof}

\subsection{\texorpdfstring{$\B_{\Z_{(3)}}(\sd,\sd)$}{B\_Z\_(3)(S\textthreeinferior,S\textthreeinferior)} and \texorpdfstring{$\B_{\F_3}(\sd,\sd)$}{B\_F\_3(S\textthreeinferior,S\textthreeinferior)} as path algebras modulo relations} Write $R :=\Z_{(3)}.$
We aim to write $\Lambda_{(3)}$, up to Morita equivalence, as path algebra modulo relations.\\ The $R$-algebra $\Lambda_{(3)}$ is Morita equivalent to $\Lambda'_{(3)} := (\e_3 +\e_4 +\e_5+\e_6)\Lambda_{(3)}(\e_3 +\e_4 +\e_5+\e_6)$ since \mbox{$\Lambda_{(3)}\e_1\cong \Lambda_{(3)}\e_2\cong\Lambda_{(3)}\e_3$} using multiplication with elements of $\Lambda_{(3)}$ with a single nonzero entry 1 in the upper $(3\ti 3)$-corner. We have the $R$-linear basis of $\Lambda'_{(3)}$ consisting of 

\[\scriptsize
\begin{array}{rclcrclcrclcrcl}
\e_3&: =& {\setlength{\arraycolsep}{2pt}\left[\begin{array}{rrrrrrrrrrc}
	0 && 0 && 0  && 0 && 0 && 0 \\
	0 && 0&& 0&& 0 && 0 && 0 \\
	0 && 0 && 1  && 0 && 0 && 0 \\
	0       && 0        &&      0   && 0&& 0 && 0\\ 
	0       && 0        &&      0   && 0 &&   0&&0\\ 
	0&& 0 && 0  && 0  && 0 && 0
	\end{array}\right]}~, &
\e_4&: =& {\setlength{\arraycolsep}{2pt}\left[\begin{array}{rrrrrrrrrrc}
	0 && 0 && 0  && 0 && 0 && 0 \\
	0 && 0 && 0&& 0 && 0 && 0 \\
	0 && 0 && 0  && 0 && 0 && 0 \\
	0       && 0        &&      0   && 1&& 0 && 0\\ 
	0       && 0        &&      0   && 0 &&   0&&0\\ 
	0&& 0 && 0  && 0  && 0 && 0
	\end{array}\right]}~, &
\e_5&: =& {\setlength{\arraycolsep}{2pt}\left[\begin{array}{rrrrrrrrrrc}
	0 && 0 && 0  && 0 && 0 && 0 \\
	0 && 0 && 0&& 0 && 0 && 0 \\
	0 && 0 && 0  && 0 && 0 && 0 \\
	0       && 0        &&      0   && 0&& 0 && 0\\ 
	0       && 0        &&      0   && 0 &&   1&&0\\ 
	0&& 0 && 0  && 0  && 0 && 0
	\end{array}\right]}~,\\\\
\e_6&: =& {\setlength{\arraycolsep}{2pt}\left[\begin{array}{rrrrrrrrrrc}
	0 && 0 && 0  && 0 && 0 && 0 \\
	0 && 0 && 0&& 0 && 0 && 0 \\
	0 && 0 && 0  && 0 && 0 && 0 \\
	0       && 0        &&      0   && 0&& 0 && 0\\ 
	0       && 0        &&      0   && 0 &&   0&&0\\ 
	0&& 0 && 0  && 0  && 0 && 1
	\end{array}\right]}~,&

\tau_1&: =&  {\setlength{\arraycolsep}{2pt}\left[\begin{array}{rrrrrrrrrrc}
	0 && 0 && 0  && 0 && 0 && 0 \\
	0 && 0 && 0&& 0 && 0 && 0 \\
	0 && 0 && 0  && 0 && 0 && 0 \\
	0       && 0        &&      0   && 0&& 0 && 0\\ 
	0       && 0        &&      0   && 0 &&   0&&0\\ 
	0&& 0 && 3  && 0  && 0 && 0
	\end{array}\right]}~, &
\tau_2&: =&{\setlength{\arraycolsep}{2pt}\left[\begin{array}{rrrrrrrrrrc}
	0 && 0 && 0  && 0 && 0 && 0 \\
	0 && 0 && 0&& 0 && 0 && 0 \\
	0 && 0 && 0  && 0 && 0 && 1 \\
	0       && 0        &&      0   && 0&& 0 && 0\\ 
	0       && 0        &&      0   && 0 &&   0&&0\\ 
	0&& 0 && 0  && 0  && 0 && 0
	\end{array}\right]}~,  \end{array}\]
\[\scriptsize\begin{array}{rclcrclcrclcrcl}
\tau_3&: =&{\setlength{\arraycolsep}{2pt}\left[\begin{array}{rrrrrrrrrrc}
	0 && 0 && 0  && 0 && 0 && 0 \\
	0 && 0 && 0&& 0 && 0 && 0 \\
	0 && 0 && 0  && 0 && 0 && 0 \\
	0       && 0        &&      0   && 0&& 0 && 0\\ 
	0       && 0        &&      0   && 0 &&   0&&0\\ 
	0&& 0 && 0  && 1  && 0 && 0
	\end{array}\right]}~, &
\tau_4&: =& {\setlength{\arraycolsep}{2pt}\left[\begin{array}{rrrrrrrrrrc}
	0 && 0 && 0  && 0 && 0 && 0 \\
	0 && 0 && 0&& 0 && 0 && 0 \\
	0 && 0 && 0  && 0 && 0 && 0 \\
	0       && 0        &&      0   && 0&& 0 && 1\\ 
	0       && 0        &&      0   && 0 &&   0&&0\\ 
	0&& 0 && 0  && 0  && 0 && 0
	\end{array}\right]}~, &
\tau_5&: =& {\setlength{\arraycolsep}{2pt}\left[\begin{array}{rrrrrrrrrrc}
	0 && 0 && 0  && 0 && 0 && 0 \\
	0 && 0 && 0&& 0 && 0 && 0 \\
	0 && 0 && 0  && 0 && 0 && 0 \\
	0       && 0        &&      0   && 0&& 0 && 0\\ 
	0       && 0        &&      0   && 0 &&   0&&0\\ 
	0&& 0 && 0  && 0  && 0 && 3\oeta
	\end{array}\right]}~, \\\\
\tau_6&: =&{\setlength{\arraycolsep}{2pt}\left[\begin{array}{rrrrrrrrrrc}
	0 && 0 && 0  && 0 && 0 && 0 \\
	0 && 0 && 0&& 0 && 0 && 0 \\
	0 && 0 && 0  && 0 && 0 && 0 \\
	0       && 0        &&      0   && 0&& 0 && 0\\ 
	0       && 0        &&      0   && 0 &&   0&&0\\ 
	0&& 0 && 0  && 0  && 0 && \oxi
	\end{array}\right]}~. 
\end{array}\]
We have $\tau_5=\tau_1\tau_2$ and $\tau_6= \tau_3\tau_4+4\tau_1\tau_2$ . Hence, as an $R$-algebra $\Lambda'_{(3)}$ is generated by $\e_3,\e_4,\e_5,\e_6,\tau_1,\tau_2,\tau_3,\tau_4$~.

Consider the quiver $\Psi:=\left[{\xymatrix@C-0.5em@R-0.5em{
		\tilde \e_5 & \tilde \e_3\ar@/^1pc/[rr]^-{\hspace{-0.7mm}\ttau_2} & &\tilde \e_6\ar@/^1pc/[ll]^-{\hspace{-0.7mm}\ttau_1}\ar@/_1pc/[rr]_-{\hspace{-0.7mm}\ttau_3} & &  \tilde \e_4\ar@/_1pc/[ll]_-{\hspace{-0.7mm}\ttau_4}
}}\right].$
We have a surjective $R$-algebra morphism $\phi: R\Psi\rightarrow \Lambda'_{(3)}$ by sending \[\begin{array}{rclccrclccrclccrclccrcl}
\tilde \e_3 &\mapsto& \e_3 &,&
\tilde \e_4 &\mapsto& \e_4 &,&
\tilde \e_5 &\mapsto& \e_5 &,&
\tilde \e_6 &\mapsto& \e_6 &,&\\
\ttau_1 &\mapsto&\tau_1  &,&
\ttau_2 &\mapsto&\tau_2  &,&
\ttau_3 &\mapsto&\tau_3  &,&
\ttau_4 &\mapsto&\tau_4  &.&
\end{array}\] 

We establish the following multiplication trees, where we underline the elements that are not in an $R$-linear relation with previous elements. 

The multiplication tree of the idempotent $\e_5$ consists only of the element $\e_5$.

\[{\xymatrix@C-0.5em@R-0.5em{
		\ul{\e_4}\ar[rr]_{\hspace{-0.5mm}\tau_4}& & \ul{\tau_4}\ar[d]_{\hspace{-0.5mm}\tau_1}\ar[rr]_{\hspace{-0.5mm}\tau_3} && \tau_4\tau_3=0\\ 
		&  &\tau_4\tau_1=0 & & & \\
}}{\xymatrix@C-0.5em@R-0.5em{
		\ul{\e_3}\ar[rr]_{\hspace{-0.5mm}\tau_2}& & \ul{\tau_2}\ar[d]_{\hspace{-0.5mm}\tau_3}\ar[rr]_{\hspace{-0.5mm}\tau_1} && \tau_2\tau_1=0\\ 
		&  &\tau_2\tau_3=0 & & & \\
}}\]

\[{\xymatrix@C-0.5em@R-0.5em{
		& & &    \ul{\e_6}\ar[dr]_{\hspace{-0.5mm}\tau_1}\ar[dl]_{\hspace{-0.5mm}\tau_3} & & & \\
		&\tau_3\tau_4\tau_3=0 & \ul{\tau_3}\ar[dl]_{\tau_4} & &\ul{\tau_1}\ar[r]^-{\hspace{-0.5mm}\tau_2} &  \ul{\tau_1\tau_2}\ar[d]_{\hspace{-0.5mm}\tau_3}\ar[rr]_{\hspace{-0.5mm}\tau_1} & &\tau_1\tau_2\tau_1=0 \\
		& \ul{\tau_3\tau_4}\ar[d]_{\hspace{-0.5mm}\tau_1}\ar[u]_{\hspace{-0.5mm}\tau_3}& & & &\tau_1\tau_2\tau_3=0  &\\
		& \tau_3\tau_4\tau_1=0 & & & & & & & & & \\
}}\]

So the  kernel of $\varphi$ contains the elements: $\ttau_4\ttau_1,~\ttau_4\ttau_3,~ \ttau_2\ttau_1,~ \ttau_2\ttau_3,~\ttau_3\ttau_4\ttau_3,~\ttau_2\ttau_3,~ \ttau_3\ttau_4 \ttau_1,~\ttau_1\ttau_2 \ttau_3,~\ttau_1\ttau_2 \ttau_1$.

Let $I$ be the ideal generated by these elements. So, $I\subseteq\kernel(\phi)$.  Therefore, $\phi$ induces a surjective $R$-algebra morphism from $R\Psi/I$ to $\Lambda_{(3)}'\ab$. We may reduce the list of generators to obtain $I = (\ttau_4\ttau_3,~\ttau_4\ttau_1,~\ttau_2\ttau_1,~\ttau_2\ttau_3 )$.

Note that $R\Psi / I$ is $R$-linearly generated by \begin{center} $\mathcal{N}:=\{\tilde \e_3+ I, \tilde \e_4+I,\tilde \e_5+I,\tilde \e_6+I, \ttau_1 +I, \ttau_2+I, \ttau_3+I,\ttau_4+I, \ttau_3\ttau_4+I, \ttau_1\ttau_2+I\}$,\end{center}cf.\ the underlined elements above.

To see that, note that a product $\xi$ of $k$ generators may be written as a product in $\mathcal{N}$ of $k'$ generators and a product of $k''$ generators, where $k=k'+k''$ and where $k'$ is choosen maximal. If $k''\geq 1$ then, using the trees above, we have $\xi=0$.  Moreover, note that $|\mathcal{N}|=10=\rk_R(\Lambda'_{(3)})$. 

Since we have an surjective algebra morphism from $R\Psi/I$ to $\Lambda'_{(3)}$, this rank argument shows this morphism to be bijective. In particular, $I=\kernel(\phi)$.

So, we obtain the

\begin{Proposition}\label{pfadsdZ3} Recall that $I= (\ttau_4\ttau_3,~\ttau_2\ttau_1,~\ttau_4\ttau_1,~\ttau_2\ttau_3 )$.
	
	We have the isomorphisms of $R$-algebras
	\[\begin{array}{rcl}
	\Lambda'_{(3)}&\xrightarrow{\sim}&R\left[{\xymatrix@C-0.5em@R-0.5em{
			\tilde \e_5 & \tilde \e_3\ar@/^2pc/[rr]^-{\hspace{-0.7mm}\ttau_2} & &\tilde \e_6\ar@/^2pc/[ll]^-{\hspace{-0.7mm}\ttau_1}\ar@/_2pc/[rr]_-{\hspace{-0.7mm}\ttau_3} & &  \tilde \e_4\ar@/_2pc/[ll]_-{\hspace{-0.7mm}\ttau_4}
	}}\right] /I\\
	\e_i&\mapsto&  \tilde \e_i +I \text{ for $i\in [3,6]$}  \\
	\tau_i&\mapsto& \ttau_i +I \text{ for $i\in [1,4]$} 
	\end{array}\]
	Recall that $\B_{\Z_{(3)}}(\sd,\sd)$ is Morita equivalent to $\Lambda'_{(3)}\ab$.
\end{Proposition}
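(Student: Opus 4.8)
The plan is to upgrade the surjection $\phi\colon R\Psi\to\Lambda'_{(3)}$ constructed above (with $R=\Z_{(3)}$) to an isomorphism $R\Psi/I\xrightarrow{\sim}\Lambda'_{(3)}$, and then to read off the Morita statement for $\B_{\Z_{(3)}}(\sd,\sd)$ through \Cref{congruencessd3}.

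First, since $I\subseteq\kernel(\phi)$ and $\phi$ is a surjective $R$-algebra morphism, it induces a surjective $R$-algebra morphism $\bar\phi\colon R\Psi/I\to\Lambda'_{(3)}$. Now I would combine the two facts already assembled before the statement: on one side, the excess-reduction argument read off the multiplication trees shows that $R\Psi/I$ is $R$-linearly spanned by the set $\mathcal{N}$ with $|\mathcal{N}|=10$; on the other side, the displayed list exhibits $\Lambda'_{(3)}$ as a free $R$-module of rank $\rk_R(\Lambda'_{(3)})=10$. Applying $\bar\phi$, the family $\bar\phi(\mathcal{N})$ spans $\Lambda'_{(3)}$; a generating family of cardinality equal to the rank of a finitely generated free module over a commutative ring is automatically an $R$-basis (for instance by the determinant trick, or: since $R=\Z_{(3)}$ is Noetherian local one reduces modulo the maximal ideal, counts $\F_3$-dimensions, and lifts by Nakayama). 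Hence $\bar\phi(\mathcal{N})$ is an $R$-basis of $\Lambda'_{(3)}$ of size $10$, so $\bar\phi$ restricts to a bijection $\mathcal{N}\to\bar\phi(\mathcal{N})$ and $\mathcal{N}$ must itself be $R$-linearly independent, since any relation among its elements would push forward to one among $\bar\phi(\mathcal{N})$. Therefore $\mathcal{N}$ is an $R$-basis of $R\Psi/I$, $\bar\phi$ carries it bijectively to a basis, and $\bar\phi$ is an isomorphism; in particular $I=\kernel(\phi)$. Tracing the generators $\tilde\e_i,\ttau_j$ through $\bar\phi$ then yields the displayed formulas for the inverse map.

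For the Morita statement I would use that $1_{\Lambda_{(3)}}=\e_1+\dots+\e_6$ is an orthogonal decomposition into primitive idempotents, together with the isomorphisms $\Lambda_{(3)}\e_1\cong\Lambda_{(3)}\e_2\cong\Lambda_{(3)}\e_3$ recorded above. Consequently every indecomposable projective $\Lambda_{(3)}$-module occurs as a direct summand of $\Lambda_{(3)}f$ for the idempotent $f:=\e_3+\e_4+\e_5+\e_6$, so $\Lambda_{(3)}f\Lambda_{(3)}=\Lambda_{(3)}$, i.e.\ $f$ is a full idempotent, and hence $\Lambda_{(3)}$ is Morita equivalent to $f\Lambda_{(3)}f=\Lambda'_{(3)}$. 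Composing with the isomorphism $\B_{\Z_{(3)}}(\sd,\sd)\cong\Lambda_{(3)}$ of \Cref{congruencessd3} gives that $\B_{\Z_{(3)}}(\sd,\sd)$ is Morita equivalent to $\Lambda'_{(3)}$.

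The genuinely delicate part is the combinatorial bookkeeping already carried out before the statement: checking via the multiplication trees that the listed products lie in $\kernel(\phi)$, that they can be trimmed to the four relations $\ttau_4\ttau_3,\ \ttau_2\ttau_1,\ \ttau_4\ttau_1,\ \ttau_2\ttau_3$ generating the same ideal $I$, and simultaneously that $R\Psi/I$ is spanned by the ten-element set $\mathcal{N}$. Once that is in place, the remaining step — the rank count forcing bijectivity of $\bar\phi$ — is short, and the Morita reduction is formal.
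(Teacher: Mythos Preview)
Your proposal is correct and follows essentially the same route as the paper: it uses the surjection $\bar\phi\colon R\Psi/I\to\Lambda'_{(3)}$ together with the spanning set $\mathcal{N}$ of size $10$ and $\rk_R(\Lambda'_{(3)})=10$ to force bijectivity, and deduces the Morita equivalence from $\Lambda_{(3)}\e_1\cong\Lambda_{(3)}\e_2\cong\Lambda_{(3)}\e_3$. The only difference is that you spell out the ``rank argument'' (via Nakayama or the determinant trick) and the fullness of the idempotent $f$, whereas the paper leaves both as one-line assertions.
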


\begin{Corollary}\label{pfadsdF3}
	As $\F_3$-algebras, we have\[\Lambda'_{(3)}/3\Lambda'_{(3)}~\cong~\F_3~\left[{\xymatrix@C-0.5em@R-0.5em{
			\tilde \e_5 & \tilde \e_3\ar@/^2pc/[rr]^-{\hspace{-0.7mm}\ttau_2} & &\tilde \e_6\ar@/^2pc/[ll]^-{\hspace{-0.7mm}\ttau_1}\ar@/_2pc/[rr]_-{\hspace{-0.7mm}\ttau_3} & &  \tilde \e_4\ar@/_2pc/[ll]_-{\hspace{-0.7mm}\ttau_4}
	}}\right] /(\ttau_4\ttau_3,~\ttau_4\ttau_1,~\ttau_2\ttau_1,~\ttau_2\ttau_3 )~.\]
	Recall that $\B_{\F_{3}}(\sd,\sd)$ is Morita equivalent to $\Lambda'_{(3)}/3\Lambda'_{(3)}\ab$.
\end{Corollary}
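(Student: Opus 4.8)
The plan is to obtain \Cref{pfadsdF3} by reducing \Cref{pfadsdZ3} modulo $3$. Write $R:=\Z_{(3)}$. By \Cref{pfadsdZ3} there is an isomorphism of $R$-algebras $\Lambda'_{(3)}\xrightarrow{\sim}R\Psi/I$ with $I=(\ttau_4\ttau_3,\ttau_4\ttau_1,\ttau_2\ttau_1,\ttau_2\ttau_3)$, and the argument given there shows $R\Psi/I$ to be $R$-free of rank $10$, with the image of $\mathcal{N}$ as an $R$-basis. Since $R/3R\cong\F_3$, I would tensor the short exact sequence $I\hookrightarrow R\Psi\twoheadrightarrow R\Psi/I$ of $R$-modules with $\F_3$ over $R$; right-exactness of the tensor product gives an exact sequence
\[
I\otimes_R\F_3\;\longrightarrow\;R\Psi\otimes_R\F_3\;\longrightarrow\;(R\Psi/I)\otimes_R\F_3\;\longrightarrow\;0~.
\]
As $\Psi$ is a finite quiver there is a canonical isomorphism $R\Psi\otimes_R\F_3\cong\F_3\Psi$, under which the image of $I\otimes_R\F_3$ becomes the two-sided ideal of $\F_3\Psi$ generated by the reductions of the four generators of $I$ (recall $I$ is $R$-spanned by the elements $p\,g\,q$ with $p,q$ paths in $\Psi$ and $g$ one of those generators). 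Since each generator of $I$ has all its coefficients equal to $1$, its reduction is the corresponding element of $\F_3\Psi$, so
\[
\Lambda'_{(3)}/3\Lambda'_{(3)}\;\cong\;(R\Psi/I)\otimes_R\F_3\;\cong\;\F_3\Psi/(\ttau_4\ttau_3,\ttau_4\ttau_1,\ttau_2\ttau_1,\ttau_2\ttau_3)~,
\]
which is the claimed isomorphism; comparing $\F_3$-dimensions ($10$ on each side) is a useful check.

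For the Morita assertion I would argue by base change along $R\twoheadrightarrow\F_3$. By construction $\B_{\F_3}(\sd,\sd)=\F_3\otimes_\Z\B_\Z(\sd,\sd)\cong\F_3\otimes_R\B_R(\sd,\sd)$, so the isomorphism $\B_R(\sd,\sd)\cong\Lambda_{(3)}$ of \Cref{congruencessd3} reduces to $\B_{\F_3}(\sd,\sd)\cong\Lambda_{(3)}/3\Lambda_{(3)}$. The Morita equivalence of $\Lambda_{(3)}$ with $\Lambda'_{(3)}$ used just before \Cref{pfadsdZ3} is effected by the idempotent $f:=\e_3+\e_4+\e_5+\e_6$, which is full in the sense $\Lambda_{(3)}f\Lambda_{(3)}=\Lambda_{(3)}$ (the isomorphisms $\Lambda_{(3)}\e_1\cong\Lambda_{(3)}\e_2\cong\Lambda_{(3)}\e_3$ put $\e_1,\e_2$ into the ideal generated by $f$). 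Fullness passes to quotients, so $\bar f:=f+3\Lambda_{(3)}$ is a full idempotent of $\Lambda_{(3)}/3\Lambda_{(3)}$ with $\bar f(\Lambda_{(3)}/3\Lambda_{(3)})\bar f\cong f\Lambda_{(3)}f/3f\Lambda_{(3)}f=\Lambda'_{(3)}/3\Lambda'_{(3)}$; hence $\B_{\F_3}(\sd,\sd)$ is Morita equivalent to $\Lambda'_{(3)}/3\Lambda'_{(3)}$, as stated.

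The computations involved are routine. The one place to be a little careful is the identification, inside $\F_3\Psi$, of the image of $I\otimes_R\F_3$ with the ideal generated by the reduced relations — this rests only on the description of $I$ as a two-sided ideal together with right-exactness of the tensor product — and the observation that $f$ stays full modulo $3$, which is immediate from $\Lambda_{(3)}f\Lambda_{(3)}=\Lambda_{(3)}$. I do not expect a genuine obstacle here; the whole statement is formal once \Cref{pfadsdZ3} and \Cref{congruencessd3} are in hand.
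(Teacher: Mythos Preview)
Your proposal is correct and matches the paper's intended approach: the paper states \Cref{pfadsdF3} as an immediate corollary of \Cref{pfadsdZ3} with no separate proof, the passage to $\F_3$ being the obvious reduction modulo $3$ (note that the four relations in $I$ have integer coefficients, so they reduce unchanged). Your elaboration via right-exactness of $-\otimes_R\F_3$ and the fullness of $f=\e_3+\e_4+\e_5+\e_6$ simply makes explicit what the paper leaves implicit.
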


\vspace*{5mm}

\begin{flushleft}
Nora Krau\ss \\
Universit\"at Stuttgart \\
Fachbereich Mathematik \\
Pfaffenwaldring 57 \\ 
70569 Stuttgart \\
\verb|kraussna@mathematik.uni-stuttgart.de|
\end{flushleft}
\end{document}